\newtheorem{theorem}{Theorem}[section]
\newtheorem{lemma}{Lemma}[section]
\newtheorem{remark}{Remark}[section]
\numberwithin{equation}{section}
\newenvironment{proof}[1][Proof]{\noindent\textbf{#1.} }{\hfill $\Box$}
 \makeatletter\setlength{\textwidth}{15.0cm}
\begin{document}
\author{ {\small Long-Jiang Gu$^a$,  Xiaoyu Zeng$^b$ and  Huan-Song Zhou$^{b}$}\thanks{Corresponding
author. Email: hszhou2002@sina.com
. To appear in Nonlinear Analysis. DOI: 10.1016/j.na.2016.10.002}
 \\
{\small  $^a$Wuhan Institute of Physics and Mathematics, Chinese Academy of Sciences,}\\
{\small  P.O. Box 71010, Wuhan 430071, PR China}\\
{\small $^b$Department of Mathematics, Wuhan University of Technology, Wuhan 430070, China}
}
\title{Eigenvalue problem for a  p-Laplacian equation with trapping potentials
  }
  \date{}
  \maketitle

\begin{abstract}
Consider the following eigenvalue problem of p-Laplacian equation
\begin{equation}\label{P}
    -\Delta_{p}u+V(x)|u|^{p-2}u=\mu|u|^{p-2}u+a| u|^{s-2}u, x\in \mathbb{R}^{n},  \tag{P}
\end{equation}
where $a\geq0$, $p\in (1,n)$ and $\mu\in\mathbb{R}$. $V(x)$ is a trapping type potential, e.g., $\inf\limits_{x \in \mathbb{R}^n}V(x)< \lim\limits_{|x|\rightarrow+\infty }V(x)$. By using constrained variational methods, we proved that there is $a^*>0$, which can be given explicitly, such that problem (\ref{P}) has a ground state $u$  with $\|u\|_{L^p}=1$ for some $\mu \in \mathbb{R}$ and all $a\in [0,a^*)$, but (\ref{P}) has no this kind of  ground state
 if $a\geq a^*$.
Furthermore, by establishing some delicate energy estimates we show   that the global maximum point of the ground states of problem (\ref{P}) approach to one of the global minima of $V(x)$ and blow up if $a\nearrow a^*$. The optimal rate of blowup is obtained for $V(x)$ being a polynomial type potential.
\end{abstract}
{\it MSC}: 35J60, 35J92, 35J20, 35P30\\
{\it Keywords}: p-Laplacian; elliptic equation; eigenvalue problem; least energy; blowup rate

\section{Introduction}
\noindent

In this paper, we are concerned with the existence and asymptotical behavior of ground states for the following eigenvalue problem of p-Laplacian
equation:
 \begin{equation}\label{Equation}
    -\Delta_{p}u+V(x)|u|^{p-2}u=\mu|u|^{p-2}u+a| u|^{s-2}u, x\in \mathbb{R}^{n},
\end{equation}
where $p\in (1,n)$, $s=p+\frac{p^2}{n}$, $a\geq0$ and $\mu\in\mathbb{R}$ are parameters, $V(x)$ is a trapping potential which satisfies
\begin{itemize}
  \item[]  $(V)$ :  $V(x)\in C(\mathbb{R}^n)$, $\lim \limits_{|x|\rightarrow \infty}V(x)=\infty$ and $\inf\limits_{x\in\mathbb{R}^n}V(x)=0$.
\end{itemize}

When $p=n=2$, (\ref{Equation}) is the so called time independent  Gross-Pitaevskii (GP) equation, which was proposed independently by Gross\cite{G}
and Pitaevskii\cite{P} in studying the Bose-Einstein condensate. In this special case, problem (\ref{Equation}) has been studied under various
conditions on the potential $V(x)$, for examples, \cite{guo,zeng,wang}, etc. Roughly speaking, if $V(x)$  is a polynomial type trapping potential such as
\begin{equation}\label{polynomial V}
   V(x)=h(x)\prod \limits_{i=1}^{m} |x-x_{i}|^{q_{i}}, x_i\neq x_j   \text{ if } i\neq j, 0<C\leq h(x)\leq \frac{1}{C}   \text{ for all }  x\in \mathbb{R}^{n},
\end{equation}
the results of \cite{guo} show that the existence of normalized $L^2$-norm ground states of (\ref{Equation}) depends heavily on the parameter
$a\geq 0$, and this kind of solution blows up at some point $x_{i_{0}}$ with $q_{i_{0}}=max \{q_1,...,q_m\}$. The rate of blowup is also given in \cite{guo}.
The main aim of this paper is to extend the results of \cite{guo} to the p-Laplacian problem (\ref{Equation}) for general $p\in(1,n)$ and $V(x)$.

As we know, the operator $-\Delta_{p}$ is no more linear if $p\neq 2$, which leads to some quite different properties from $-\Delta$ (i.e. $p=2$),
for examples, it is well known that the limit equation of (\ref{Equation}), that is,
\begin{equation}\label{limit equation}
     -\Delta_{p}u+\frac{p}{n}|u|^{p-2}u=| u|^{s-2}u,
\end{equation}
has a unique positive radially symmetric solution (see e.g., \cite{GNN,K,Li,mcleod}) for $p=2$, but in general case we know that this fact holds only for $p\in (1,2)$
(see e.g., \cite{damascelli,gong,tang}), which is still unknown if
$p\in(1,n)$ and $p\neq 2$. However, in \cite{guo} the uniqueness of solutions  of the  limit equation of (\ref{Equation}) plays a crucial role not only
in applying the  Gagliardo-Nirenberg inequality, but also in getting the exact blowup rate for the ground state of (\ref{Equation}). In order to extend
the results of \cite{guo} to the p-Laplacian case, the key step is how to avoid using the uniqueness of solutions of the limit equation  (\ref{limit equation}).
In this paper, we overcome this difficulty by detailed analyzing the  relations between the extremal functions of the sharp constant of the Gagliardo-Nirenberg inequality
and the ground states of the limit equation  (\ref{limit equation}).
On the other hand, if $p\neq2$, the expansion of the main part of the variational functional of (\ref{Equation}) is also more complicated than that of $p=2$,
which causes more difficulties in making the energy estimates than in \cite{guo}.

To get a ground state solution of (\ref{Equation}), we consider the following constrained minimization problem
\begin{equation}\label{minimizing problem}
 e(a)=\inf\{E_a(u): u\in\mathcal{H}, \int_{\mathbb{R}^n}|u|^p dx=1\},
\end{equation}
where $E_a$ is the energy functional defined by
\begin{eqnarray}
\nonumber  E_{a}(u) &=& \int_{\mathbb{R}^n} \left(|\nabla u|^p+V(x)|u|^p
\right)dx-\frac{pa}{s}\int_{{\mathbb{R}}^n} |u|^s dx \\
   &=&   \int_{\mathbb{R}^n} \left(|\nabla u|^p+V(x)|u|^p\right)dx-\frac{na}{n+p}\int_{{\mathbb{R}}^n} |u|^s dx,\label{GP}
\end{eqnarray}
for $u\in \mathcal{H}:=\Big\{ {u\in W^{1,p}(\mathbb{R}^{n}): \int_{\mathbb{R}^n}
	V(x)|u|^p dx<+\infty} \Big\}$ and
\begin{equation*}
\|u\|_{\mathcal{H}}:={ \left(\int_{\mathbb{R}^n}
|\nabla u|^p+V(x)|u(x)|^p dx \right)}^{\frac{1}{p}}.
\end{equation*}
Clearly, a minimizer of (\ref{minimizing problem}) is a weak solution of (\ref{Equation}) for some $\mu\in\mathbb{R}$, which is indeed a
Lagrange multiplier.

For problem (\ref{minimizing problem}), the power $s=p+\frac{p^2}{n}$ is critical in the sense that $e(a)$ can be $-\infty$ if $s>p+\frac{p^2}{n}$ .
Indeed , take $u\in \mathcal{H}$ and $\|u\|_{L^p(\mathbb{R}^n)}=1$ and let $v_{\lambda}(x)=\lambda^{\frac{n}{p}}u(\lambda x)$, it is easy to see that
\begin{equation*}
    E_a(v_{\lambda})=\lambda^{p}\int_{\mathbb{R}^n}  |\nabla u|^p dx +\int_{{\mathbb{R}}^n}V(\frac{x}{\lambda})|u|^p dx-\frac{pa}{s}\lambda^{\frac{ns}{p}-n}
\int_{{\mathbb{R}}^n} |u|^s dx \overset{\lambda\rightarrow\infty}{\longrightarrow}-\infty,
\end{equation*}
if $\frac{ns}{p}-n>p$, i.e.,  $s>p+\frac{p^2}{n}$. When
$p=n=2$, $s=4$ is the so called mass critical exponent for GP equation.

For $s=p+\frac{p^2}{n}$, we recall some known results about the limit equation
(\ref{limit equation}). First we define the energy functional
\begin{equation*}\label{limit energy functional}
    I(u)=\frac{1}{p}\int_{\mathbb{R}^n} ( |\nabla u|^p +\frac{p}{n}|u|^p) dx-\frac{1}{s}\int_{\mathbb{R}^n}|u|^s dx, u\in W^{1,p}(\mathbb{R}^n).
\end{equation*}
It is well known that $u$ is a weak solution of (\ref{limit equation}) if and only if
$$ \langle I'(u),\varphi\rangle =0 \text{ for all } \varphi \in W^{1,p}(\mathbb{R}^n).$$
Next, we denote the set of all nontrivial weak solutions of (\ref{limit equation}) by $\mathcal{S}$, that is
$$\mathcal{S}:=\{u\in W^{1,p}(\mathbb{R}^n)\setminus\{0\}: \langle I'(u),\varphi\rangle =0 \quad \forall \varphi \in W^{1,p}(\mathbb{R}^n)\}.$$
Then, for any $u\in \mathcal{S}$,  by using Pohozaev identity(see \cite{Poho}) we know that
\begin{equation}\label{Q}
    \int_{\mathbb{R}^{n}} |u|^s dx= (1+\frac{p}{n})\int_{\mathbb{R}^{n}} |\nabla u|^p dx= (1+\frac{p}{n})\int_{\mathbb{R}^{n}} | u|^p dx,
\end{equation}
where $s=p+\frac{p^2}{n}$. Now, we say $Q\in W^{1,p}(\mathbb{R}^n)$ is a {\it ground state}  of (\ref{limit equation}) if it is the {\it least energy solution}
among all nontrivial  weak solutions of (\ref{limit equation}). Then, it follows from (\ref{Q}) that
\begin{align}
Q\in \mathcal{G}:=  \{u\in \mathcal{S}: I(u)= \inf\limits_{v\in \mathcal{S}}I(v)\} 
    = \{u\in \mathcal{S}: I(u)= \inf\limits_{v\in \mathcal{S}}\frac{1}{n}\int_{\mathbb{R}^{n}} |v|^p dx\}. \label{jitai}
\end{align}
Clearly, if $Q(x) \in \mathcal{G}$, then $Q(x-x_0)\in \mathcal{G}$ for any $x_0 \in \mathbb{R}^n$. Furthermore, by the result of \cite{gong}, any ground state of (\ref{Equation}) decays exponentially at infinity, that is,
for any $Q\in \mathcal{G}$ there exists $\delta>0$  such that
\begin{equation}\label{decay}
    |Q(x)|\leq e^{-\delta|x|}, \text{ for } |x| \text{ large. }
\end{equation}

Finally, we give the main theorems of the paper. Our first theorem is concerned with the existence of minimizers of  the minimization problem
(\ref{minimizing problem}) and hence Lemma \ref{lemma 2.4} implies the existence of ground states of (\ref{Equation}), which is consistent with the results of \cite{Bao Cai YY,guo,Zhangjian}
if $p=2$.
\begin{theorem}\label{mainresult}
Let $Q\in\mathcal{G}$ and let
\begin{equation}\label{a*}
  a^*= \Big( \int_{\mathbb{R}^{n}} |Q|^p \Big)^\frac{p}{n}.
  \end{equation}
If  $ p\in (1,n)$ and  $V(x)$ satisfies the  condition $(V)$.  Then,
\begin{description}
  \item[(i)] Problem (\ref{minimizing problem}) has at least one minimizer if $0\leq a<a^*$.
  \item[(ii)] Problem (\ref{minimizing problem}) has no minimizer if $a\geq a^*$ and $e(a)=-\infty$ if $a>a^{*}$.
Moreover, $e(a)>0$ if $a<a^*$ and $\lim\limits_{a\nearrow a^*}e(a)=e(a^*)=0$.
  \end{description}
\end{theorem}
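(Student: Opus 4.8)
The proof rests on the sharp Gagliardo--Nirenberg inequality attached to the exponent $s=p+\tfrac{p^2}{n}$: for all $u\in W^{1,p}(\mathbb{R}^n)$,
\begin{equation*}
\int_{\mathbb{R}^n}|u|^{s}\,dx\ \le\ \frac{n+p}{n\,a^{*}}\Big(\int_{\mathbb{R}^n}|\nabla u|^{p}\,dx\Big)\Big(\int_{\mathbb{R}^n}|u|^{p}\,dx\Big)^{p/n},
\end{equation*}
with equality exactly on the scaled translates of the ground states $Q\in\mathcal{G}$; the constant $\frac{n+p}{na^{*}}$ is forced by the Pohozaev identities (\ref{Q}) together with the definition (\ref{a*}) of $a^{*}$, and the identification of the equality cases with $\mathcal{G}$ (without uniqueness of $Q$) is precisely the delicate point announced in the introduction. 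Granting it, any $u$ with $\int_{\mathbb{R}^n}|u|^{p}=1$ satisfies
\begin{equation*}
E_{a}(u)\ \ge\ \Big(1-\frac{a}{a^{*}}\Big)\int_{\mathbb{R}^n}|\nabla u|^{p}\,dx+\int_{\mathbb{R}^n}V(x)|u|^{p}\,dx ,
\end{equation*}
which already gives $e(a)\ge0$ for $a\le a^{*}$ and shows that, when $a<a^{*}$, every minimizing sequence for $e(a)$ is bounded in $\mathcal{H}$ (and $e(a)<+\infty$ since $W^{1,p}\hookrightarrow L^{s}$).

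\emph{Part (i).} I would use the direct method. Let $\{u_{k}\}\subset\mathcal{H}$ with $\int_{\mathbb{R}^n}|u_{k}|^{p}=1$ and $E_{a}(u_{k})\to e(a)$, $0\le a<a^{*}$; by the coercivity bound it is bounded in $\mathcal{H}$, hence in $W^{1,p}(\mathbb{R}^n)$. Since $V(x)\to\infty$, the embedding $\mathcal{H}\hookrightarrow L^{q}(\mathbb{R}^n)$ is compact for $p\le q<p^{*}=\frac{np}{n-p}$ — combine Rellich--Kondrachov on balls with the uniform tail estimate $\int_{|x|>R}|u_{k}|^{p}\le(\inf_{|x|>R}V)^{-1}\int_{\mathbb{R}^n} V|u_{k}|^{p}\to0$ as $R\to\infty$ — and $s<p^{*}$, so along a subsequence $u_{k}\to u$ strongly in $L^{p}$ and $L^{s}$ and $u_{k}\rightharpoonup u$ in $\mathcal{H}$. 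Then $\int_{\mathbb{R}^n}|u|^{p}=1$, while weak lower semicontinuity of $u\mapsto\int|\nabla u|^{p}$, Fatou for $\int V|u|^{p}$, and strong $L^{s}$ convergence yield $E_{a}(u)\le\liminf E_{a}(u_{k})=e(a)$; thus $u$ is a minimizer.

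\emph{Part (ii).} Fix a global minimum point $x_{0}$ of $V$ (it exists since $V$ is continuous and coercive, with $V(x_{0})=0$) and put $u_{\tau}(x)=\|Q\|_{L^{p}}^{-1}\tau^{n/p}Q(\tau(x-x_{0}))$. Using (\ref{Q}) one computes $\int|u_{\tau}|^{p}=1$, $\int|\nabla u_{\tau}|^{p}=\tau^{p}$, $\int|u_{\tau}|^{s}=\frac{n+p}{na^{*}}\tau^{p}$, and $\int V|u_{\tau}|^{p}=\|Q\|_{L^{p}}^{-p}\int V(x_{0}+\tau^{-1}y)|Q(y)|^{p}\,dy\to0$ as $\tau\to\infty$ (continuity of $V$ and the exponential decay (\ref{decay}) of $Q$, after a harmless truncation), so
\begin{equation*}
E_{a}(u_{\tau})=\Big(1-\frac{a}{a^{*}}\Big)\tau^{p}+\int_{\mathbb{R}^n}V(x)|u_{\tau}|^{p}\,dx .
\end{equation*}
For $a>a^{*}$, letting $\tau\to\infty$ gives $E_{a}(u_{\tau})\to-\infty$, so $e(a)=-\infty$ and there is no minimizer. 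For $a=a^{*}$ the same limit gives $e(a^{*})\le0$, hence $e(a^{*})=0$; and if $u_{0}$ minimized $e(a^{*})$ then $0=E_{a^{*}}(u_{0})\ge\int V|u_{0}|^{p}\ge0$ would force both $\int V|u_{0}|^{p}=0$ and equality in the Gagliardo--Nirenberg inequality, so $u_{0}$ is a scaled translate of some $Q\in\mathcal{G}$; such a function is strictly positive by the strong maximum principle, whereas $V>0$ outside a large ball gives $\int V|u_{0}|^{p}>0$, a contradiction. Hence (\ref{minimizing problem}) has no minimizer for any $a\ge a^{*}$.

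It remains to treat the two quantitative claims in (ii). If $a<a^{*}$ and $e(a)=0$, then by part (i) it is attained at some admissible $u_{0}$, and $0=E_{a}(u_{0})\ge(1-\tfrac{a}{a^{*}})\int|\nabla u_{0}|^{p}\ge0$ forces $\nabla u_{0}\equiv0$, hence $u_{0}\equiv0$, contradicting $\int|u_{0}|^{p}=1$; so $e(a)>0$ on $[0,a^{*})$. Finally $a\mapsto E_{a}(u)$ is nonincreasing for each fixed $u$, so $e$ is nonincreasing on $[0,a^{*}]$ and $\lim_{a\nearrow a^{*}}e(a)$ exists and is $\ge e(a^{*})=0$; choosing $\tau=\tau(a)\to\infty$ with $(a^{*}-a)\tau(a)^{p}\to0$ (say $\tau(a)^{p}=(a^{*}-a)^{-1/2}$) gives $0\le e(a)\le E_{a}(u_{\tau(a)})\to0$, whence $\lim_{a\nearrow a^{*}}e(a)=e(a^{*})=0$. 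The one genuinely hard step is the Gagliardo--Nirenberg ingredient — pinning down the sharp constant and, above all, characterizing the equality cases through $\mathcal{G}$ in the absence of uniqueness of $Q$ for $p\neq2$; everything else is the standard compactness argument (which is where hypothesis $(V)$ enters, via the compact embedding) plus short energy comparisons with the concentrating family $u_{\tau}$.
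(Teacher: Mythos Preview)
Your proof is correct and follows essentially the same approach as the paper: the sharp Gagliardo--Nirenberg inequality for the lower bound and coercivity, the compact embedding $\mathcal{H}\hookrightarrow L^{q}$ for the direct method in part (i), and a rescaled ground state $Q$ centred at a zero of $V$ as trial function for the upper bounds in part (ii). The one place the paper is more careful is the trial function: since $(V)$ imposes no growth restriction on $V$, the paper multiplies $Q(\tau(x-x_{0}))$ by an explicit cutoff $\varphi\big(\tfrac{x-x_{0}}{R}\big)$ to guarantee $u_{\tau}\in\mathcal{H}$ and tracks the resulting $O(e^{-\delta\tau R})$ errors via Lemma~\ref{a b Cp}, which is precisely the ``harmless truncation'' you allude to parenthetically.
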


\begin{remark}
The number $a^*$ defined in (\ref{a*}) is independent of the choice of $Q \in \mathcal{G}$. In fact, let $c_0$ be the least energy of (\ref{limit equation}),
then, for any $Q\in \mathcal{G}$, $I(Q)=c_0$ and it follows from (\ref{jitai}) that $ \int_{\mathbb{R}^{n}} |Q|^p dx=n c_0$, which is independent of $Q \in \mathcal{G}$.
\end{remark}

By  Theorem \ref{mainresult}, we know that, for any $a\in [0,a^*)$, problem (\ref{minimizing problem}) has a solution $u_a$, then it is interesting to ask
what would happen when $a$ goes to $a^*$ from below, which is simply denoted by $a\nearrow a^*$ in what follows. Our next theorem answers this question
for the general type of trapping potential $V(x)$ as in (V).

\begin{theorem}\label{mainresult2}
Let $u_{a}\geq 0$ be a  minimizer of (\ref{minimizing problem}) for $a\in (0,a^{*})$. If the condition $(V)$ holds, then

\begin{description}
  \item[(i)] \begin{equation}\label{epxl}
    \varepsilon_{a}\triangleq\big(\int_{\mathbb{R}^{n}}|\nabla u_{a}|^{p}\big)^{-\frac{1}{p}}\rightarrow 0 \quad as \quad a\nearrow a^{*}.
    \end{equation}
  \item[(ii)] Let $\bar{z}_{a}$ be a global  maximum point of $u_{a}(x)$, there holds

\begin{equation}\label{dist}
\lim \limits_{a\nearrow a^{*}}  dist (\bar{z}_{a},\mathcal{A})=0,
\end{equation}
where $\mathcal{A}=\{x \in \mathbb{R}^{n}:V(x)=0\}$.

   \item [(iii)]For any sequence $\{a_{k}\}$ with $a_{k}\nearrow a^{*}$ as
 $k\rightarrow\infty$, there exists a subsequence of $\{a_{k}\}$, still denoted by $\{a_{k}\}$, such that
 \begin{equation}\label{jixian}
 \lim \limits_{k\rightarrow\infty}\varepsilon_{a_k}^{\frac{n}{p}}u_{a_{k}}(\varepsilon_{a_k}x+\bar{z}_{a_k})=\frac{Q(x)}{a^{*\frac{n}{p^{2}}}}
\text{ in } W^{1,p}(\mathbb{R}^n), \text{ for some } Q\in \mathcal{G},
 \end{equation}
 where  $\bar{z}_{a_k}$ is a global maximum point of $u_{a_k}$  and $\lim \limits_{k\rightarrow\infty}\bar{z}_{a_k}
 =x_{0} \in \mathcal{A}$.
\end{description}

 \end{theorem}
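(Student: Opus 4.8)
The plan is to prove the three parts of Theorem~\ref{mainresult2} in order, since each relies on the previous.

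\textbf{Step 1: Blow-up of the gradient (part (i)).} First I would show that $e(a) \to 0$ as $a \nearrow a^*$, which is already available from Theorem~\ref{mainresult}(ii). Next, using the sharp Gagliardo--Nirenberg inequality $\int |u|^s \le \frac{s}{p} (a^*)^{-p/n} \left( \int |\nabla u|^p \right) \left( \int |u|^p \right)^{p^2/n}$ with equality essentially only on the rescalings of $Q$, together with the constraint $\int |u_a|^p = 1$, I get
\begin{equation*}
E_a(u_a) \ge \left( 1 - \frac{a}{a^*} \right) \int_{\mathbb{R}^n} |\nabla u_a|^p\, dx + \int_{\mathbb{R}^n} V(x) |u_a|^p\, dx \ge \left( 1 - \frac{a}{a^*} \right) \int_{\mathbb{R}^n} |\nabla u_a|^p\, dx.
\end{equation*}
Since $e(a) = E_a(u_a) \to 0$, this forces $\int |\nabla u_a|^p$ to stay bounded only if one is careful; in fact to prove $\varepsilon_a \to 0$, i.e. $\int |\nabla u_a|^p \to \infty$, I would argue by contradiction: if $\int |\nabla u_{a_k}|^p$ stays bounded along some sequence $a_k \nearrow a^*$, then $\{u_{a_k}\}$ is bounded in $\mathcal{H}$, and using the compact embedding $\mathcal{H} \hookrightarrow L^p(\mathbb{R}^n)$ (a consequence of $V(x) \to \infty$) one extracts a nonzero weak limit $u_0$ with $\int |u_0|^p = 1$, whence $e(a^*) \le E_{a^*}(u_0) \le \liminf E_{a_k}(u_{a_k}) = 0$; but one also shows $E_{a^*}(u_0) > 0$ unless $u_0$ achieves equality in Gagliardo--Nirenberg, which is incompatible with $\int V|u_0|^p$ being finite and positive — a contradiction. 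Matching upper and lower bounds for $e(a)$ using a test function of the form $\tau^{n/p} Q(\tau(x - x_0))/\|Q\|_{L^p}$ with $x_0 \in \mathcal{A}$ and $\tau \to \infty$ pins down the rate and confirms $\varepsilon_a \to 0$.

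\textbf{Step 2: Concentration at a minimum of $V$ (part (ii)).} Define the rescaled function $w_a(x) = \varepsilon_a^{n/p} u_a(\varepsilon_a x + \bar z_a)$, which satisfies $\int |w_a|^p = 1$ and $\int |\nabla w_a|^p = 1$. I would first establish that $w_a$ does not vanish, i.e. there exist $R, \eta > 0$ and points $y_a$ with $\int_{B_R(y_a)} |w_a|^p \ge \eta$; this uses the rescaled equation, elliptic regularity / Moser iteration for the $p$-Laplacian to get a uniform lower bound on $\max w_a$ at $\bar z_a$ (after translating so the maximum sits at the origin — here I use that $\bar z_a$ is a global maximum point of $u_a$). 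Then a concentration-compactness argument on $\{w_a\}$ gives a nonzero weak limit $w_0$ in $W^{1,p}(\mathbb{R}^n)$. Plugging back into the energy and using $e(a) \to 0$ plus Fatou, I show $\int_{\mathbb{R}^n} V(\varepsilon_a x + \bar z_a)|w_a|^p\,dx \to 0$. If $\mathrm{dist}(\bar z_a, \mathcal{A})$ did not tend to $0$, then along a subsequence $V(\varepsilon_a x + \bar z_a) \ge c > 0$ on a fixed ball where $w_0 \ne 0$ (using continuity of $V$, $\varepsilon_a \to 0$, and $\inf_{\mathrm{dist}(\cdot,\mathcal{A}) \ge \delta} V > 0$), contradicting the limit being $0$. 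Hence $\mathrm{dist}(\bar z_a, \mathcal{A}) \to 0$.

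\textbf{Step 3: Identification of the limit profile (part (iii)).} Along a sequence $a_k \nearrow a^*$, from Step 2 the translates $\bar z_{a_k}$ subconverge to some $x_0 \in \mathcal{A}$ (since $\mathcal{A}$ is compact, being a closed set inside $\{V \le \inf V + 1\}$ which is bounded). For the rescaled functions $w_{a_k}$, I would show the weak limit $w_0$ is in fact a minimizer of $I$ over $\mathcal{S}$ up to scaling: using $\int |w_{a_k}|^p = \int |\nabla w_{a_k}|^p = 1$, the vanishing of the potential term, and $e(a_k) \to 0$, one gets $\frac{na_k}{n+p}\int |w_{a_k}|^s \to 1 + \frac{p}{n}$ after normalization, and by the sharp Gagliardo--Nirenberg inequality together with strong $L^s$ convergence (from the nonvanishing + a tightness argument ruling out dichotomy), $w_0$ must be an extremal of the inequality, hence — by the relation between extremals and ground states of the limit equation established in the paper — $w_0(x) = Q(x - y)/a^{*n/p^2}$ for some $Q \in \mathcal{G}$ and some translation $y$, which can be absorbed into redefining $\bar z_{a_k}$. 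Finally, comparing norms forces the convergence to be strong in $W^{1,p}(\mathbb{R}^n)$: $\|\nabla w_{a_k}\|_p \to \|\nabla w_0\|_p$ and $\|w_{a_k}\|_p \to \|w_0\|_p$ with weak convergence in the uniformly convex space $W^{1,p}$ upgrades to strong convergence.

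\textbf{Main obstacle.} The hardest part is Step 3, specifically identifying $w_0$ as a rescaled ground state \emph{without} the uniqueness of solutions to the limit equation (unavailable for general $p \in (1,n)$). The delicate point is to rule out loss of mass at infinity (dichotomy/vanishing in the concentration-compactness dichotomy) for $\{w_{a_k}\}$ so that the $L^s$ norm passes to the limit and the Gagliardo--Nirenberg equality is attained exactly by $w_0$; this requires careful energy bookkeeping exploiting that $\bar z_{a_k}$ is a genuine maximum point and the exponential decay estimate~(\ref{decay}) for the limiting profile. A secondary technical difficulty throughout is the $p \ne 2$ expansion of the energy functional, which is not a clean quadratic form, so the comparison arguments for $e(a)$ need the more involved elementary inequalities for $|\nabla(\cdot)|^p$ and $|\cdot|^p$ rather than simple orthogonality.
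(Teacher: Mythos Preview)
Your proposal is correct and follows essentially the same route as the paper: the contradiction via compact embedding for (i), the rescaling $\bar w_a$ together with a Moser-type sup bound at the maximum and Fatou for (ii), and concentration-compactness (ruling out vanishing by the mass lower bound at $0$ and dichotomy by the sharp Gagliardo--Nirenberg energy inequality) followed by norm convergence plus uniform convexity of $W^{1,p}$ for (iii), with the limit identified via Lemma~\ref{lemma 2.2}. One small correction: you need not ``absorb the translation into $\bar z_{a_k}$'' (which is fixed as a global maximum) --- the paper simply uses that $\mathcal{G}$ is translation-invariant, so $Q(\cdot - y)\in\mathcal{G}$ already, and Lemma~\ref{lemma 2.2} applied to the limit (which satisfies the rescaled equation with the correct norms) gives $w_0 = a^{*-n/p^2}Q$ directly.
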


The above theorem tells us that as $a\nearrow a^*$, the minimizers of (\ref{minimizing problem}) must concentrate and blow up at a
minimum point of $V(x)$. Our final result shows that the concentration behavior and blow up rate of the minimizers of (\ref{minimizing problem}) can be
refined if we have more information on the potential $V(x)$.
Accurately, we assume that the trapping potential $V(x)$ is of some ``polynomial type'' and has $m\geq1$ isolated minima, for instance,
$V(x)$ is given by (\ref{polynomial V}).
Let     $Q \in \mathcal{G}$  be   given in Theorem \ref{mainresult2}, and let
 $y_0 \in \mathbb{R}^n$  be such that
\begin{equation}\label{y_0}
        \int_{\mathbb{R}^{n}} |x+y_0|^{q}Q^{p}(x)dx= \inf \limits_{y\in \mathbb{R}^n} \int_{\mathbb{R}^{n}} |x+y|^{q}Q^{p}(x)dx, \quad q=max\{q_{1},...,q_{m}\}.
\end{equation}
Set
\begin{equation*}
    \lambda_{i}=\int_{\mathbb{R}^{n}} |x+y_0|^{q}Q^{p}(x)dx \lim \limits_{x\rightarrow x_{i}} \frac{V(x)}{|x-x_{i}|^{q}} \in (0,\infty],
\end{equation*}
 and
 \begin{equation}\label{lambda}
    \lambda=min\{\lambda_{1},...,\lambda_{m}\}, \quad \mathcal{Z}:=\{x_{i}:\lambda_{i}=\lambda\}.
 \end{equation}

\begin{remark}
If $1<p\leq 2$,   the  ground state   $Q$ of (\ref{limit equation}) is unique (up to translation) and
radially symmetric, see e.g., \cite{tang,damascelli,GNN,K,Li,mcleod}, then it is not difficult to know that $y_0=0$  in (\ref{y_0}).
\end{remark}

Based on  Theorem \ref{mainresult2} and the above notations, we have the following theorem, which is a refined version of Theorem \ref{mainresult2}
when the potential $V(x)$ is given by (\ref{polynomial V}).

\begin{theorem}\label{mainresult3}
If $V(x)$ satisfies (\ref{polynomial V}), let $\{a_k\} \subset (0,a^*)$ be the convergent subsequence in  Theorem \ref{mainresult2} $\mathbf{(iii)}$ and let $u_{a_{k}}$ be a   corresponding minimizer of (\ref{minimizing problem}), then
\begin{description}
  \item[(i)] For $e(a_k)$ defined by (\ref{minimizing problem}), there holds
\begin{equation}\label{energy}
    e(a_k)\approx \frac{(a^*-a_k)^{\frac{q}{p+q}}}{{a^{*}}^{\frac{n+q}{p+q}}}\lambda^{\frac{p}{p+q}} ((\frac{q}{p})^{\frac{p}{p+q}}+(\frac{p}{q})^{\frac{q}{p+q}}) \quad as \quad k\rightarrow \infty,
\end{equation}
where $f(a_k)\approx g(a_k)$ means that $f/g\rightarrow 1$ as $k\rightarrow \infty$.
  \item[(ii)] Let $Q \in \mathcal{G}$ be obtained in (\ref{jixian})
  and let
  $\varepsilon_{a_k}$ be defined by (\ref{epxl}), then (\ref{jixian}) still holds for $u_{a_{k}}$, but $\varepsilon_{a_k}$ can be precisely estimated as
  \begin{equation} \label{sigma}
  \varepsilon_{a_k} \approx \sigma_{k} \triangleq {a^{*}}^{\frac{n-p}{p(p+q)}}
 (a^{*}-a_{k})^{\frac{1}{p+q}} \lambda^{-\frac{1}{p+q}}(\frac{p}{q})^{\frac{1}{p+q}},
  \end{equation}
  that is,
 \begin{equation}\label{jixian2}
 \lim \limits_{k\rightarrow\infty}\sigma_{k}^{\frac{n}{p}}u_{a_{k}}(\sigma_{k}x+\bar{z}_{a_k})=\frac{Q(x)}{a^{*\frac{n}{p^{2}}}}
 \text{ in } W^{1,p}(\mathbb{R}^n).
\end{equation}
 Moreover, for each $k$, if $\bar{z}_{a_k}$  is a global maximum point of $u_{a_{k}}$, then
  \[
  \lim \limits_{k\rightarrow\infty}\bar{z}_{a_k}=x_{0} \text{ with } x_{0}\in \mathcal{Z}.
  \]

\end{description}
 \end{theorem}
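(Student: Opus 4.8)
The plan is to trap $e(a_k)$ between an explicit trial-function upper bound and a sharp Gagliardo--Nirenberg lower bound, both of the form $C(\mu)(a^{*}-a_{k})^{q/(p+q)}(1+o(1))$ with $\mu\mapsto C(\mu)$ strictly increasing; forcing the two to agree pins down $\mu=\lambda$, the position of $\bar{z}_{a_k}$ in $\mathcal{Z}$, and the blow-up rate $\sigma_k$, after which $(\ref{jixian2})$ follows from $(\ref{jixian})$ by rescaling. \textbf{Upper bound.} Given $x_i\in\mathcal{Z}$ and $\tau>0$ take the test function $u_\tau(x)={a^{*}}^{-n/p^{2}}\tau^{n/p}Q\bigl(\tau(x-x_i)-y_0\bigr)$, so $\int|u_\tau|^p=1$ by $(\ref{a*})$. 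A change of variables and $(\ref{Q})$ give $\int|\nabla u_\tau|^p=\tau^p$ and $\frac{na_k}{n+p}\int|u_\tau|^s=\frac{a_k}{a^{*}}\tau^p$, while the exponential decay $(\ref{decay})$ of $Q$, the local behaviour $V(x)|x-x_i|^{-q}\to\mu_i$ with $\mu_i\int|x+y_0|^qQ^p=\lambda_i=\lambda$, and the minimality $(\ref{y_0})$ of $y_0$ yield $\int V|u_\tau|^p=(1+o(1))\,\lambda\,{a^{*}}^{-n/p}\tau^{-q}$ as $\tau\to\infty$. Hence $E_{a_k}(u_\tau)=g_\lambda(\tau^{-1})+o(\tau^{-q})$, where $g_\mu(\varepsilon):=\tfrac{a^{*}-a_{k}}{a^{*}}\varepsilon^{-p}+\mu\,{a^{*}}^{-n/p}\varepsilon^{q}$; a one-variable minimisation gives $\min_{\varepsilon>0}g_\mu(\varepsilon)=g_\mu(\sigma_k^{(\mu)})$ with $\sigma_k^{(\mu)}$ equal to $(\ref{sigma})$ after replacing $\lambda$ by $\mu$, so $\sigma_k^{(\lambda)}=\sigma_k$ and the choice $\tau=\sigma_k^{-1}$ produces the ``$\le$'' half of $(\ref{energy})$.

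\textbf{Lower bound and identification.} By the sharp Gagliardo--Nirenberg inequality (best constant $\tfrac{1+p/n}{a^{*}}$, attained exactly at the dilates and translates of any $Q\in\mathcal{G}$), $\frac{na_k}{n+p}\int|u_{a_k}|^s\le\frac{a_k}{a^{*}}\int|\nabla u_{a_k}|^p$, so with $\varepsilon_{a_k}$ as in $(\ref{epxl})$ and $V\ge0$,
$$e(a_k)\ \ge\ \frac{a^{*}-a_{k}}{a^{*}}\varepsilon_{a_k}^{-p}+\int_{\mathbb{R}^{n}}V|u_{a_k}|^p.$$
Set $w_k(y)=\varepsilon_{a_k}^{n/p}u_{a_k}(\varepsilon_{a_k}y+\bar{z}_{a_k})$; by Theorem \ref{mainresult2} $\mathbf{(iii)}$, $\|w_k\|_{L^p}=\|\nabla w_k\|_{L^p}=1$, $w_k\to{a^{*}}^{-n/p^{2}}Q$ in $W^{1,p}(\mathbb{R}^{n})$, and $\bar{z}_{a_k}\to x_0$, where $x_0\in\mathcal{A}=\{x_1,\dots,x_m\}$ by $(\ref{polynomial V})$; write $x_0=x_{i_0}$. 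Since $V(x)|x-x_{i_0}|^{-q_{i_0}}\to\mu_{i_0}'\in(0,\infty)$, for each fixed $R>0$ we have $V(\varepsilon_{a_k}y+\bar{z}_{a_k})=(\mu_{i_0}'+o_k(1))\,|\varepsilon_{a_k}y+\bar{z}_{a_k}-x_{i_0}|^{q_{i_0}}$ uniformly on $\{|y|\le R\}$, and therefore
$$\varepsilon_{a_k}^{-q_{i_0}}\int V|u_{a_k}|^p\ \ge\ (\mu_{i_0}'+o_k(1))\inf_{\zeta\in\mathbb{R}^{n}}\int_{|y|\le R}|y+\zeta|^{q_{i_0}}w_k^p\,dy.$$
As $\int_{|y|\le R}w_k^p\to\int_{|y|\le R}{a^{*}}^{-n/p}Q^p>0$ for $R$ large, the infimum over $\zeta$ is attained on a set bounded uniformly in $k$, so the right side converges; letting $R\to\infty$ and using $(\ref{decay})$ gives $\liminf_k\varepsilon_{a_k}^{-q_{i_0}}\int V|u_{a_k}|^p\ge c_{i_0}>0$. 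Feeding this into the displayed lower bound and minimising $\tfrac{a^{*}-a_{k}}{a^{*}}\varepsilon^{-p}+c_{i_0}\varepsilon^{q_{i_0}}$ yields $e(a_k)\ge(1+o(1))\,c\,(a^{*}-a_{k})^{q_{i_0}/(p+q_{i_0})}$; since $t\mapsto t/(p+t)$ is strictly increasing, comparison with the upper bound forces $q_{i_0}=q$. Redoing the estimate with $q_{i_0}=q$ and invoking the optimal-shift identity $\inf_\zeta\int|y+\zeta|^q\,{a^{*}}^{-n/p}Q^p={a^{*}}^{-n/p}\int|x+y_0|^qQ^p$ (by $(\ref{y_0})$), we obtain $\liminf_k\varepsilon_{a_k}^{-q}\int V|u_{a_k}|^p\ge\lambda_{i_0}\,{a^{*}}^{-n/p}$; from the upper bound one also reads off $\varepsilon_{a_k}\asymp(a^{*}-a_{k})^{1/(p+q)}$, so the errors above are $o(e(a_k))$ and
$$e(a_k)\ \ge\ (1+o(1))\Bigl(\frac{a^{*}-a_{k}}{a^{*}}\varepsilon_{a_k}^{-p}+\lambda_{i_0}\,{a^{*}}^{-n/p}\varepsilon_{a_k}^{q}\Bigr)\ \ge\ (1+o(1))\min_{\varepsilon}g_{\lambda_{i_0}}.$$
Comparing with the upper bound and using that $\mu\mapsto\min_\varepsilon g_\mu$ is strictly increasing while $\lambda_{i_0}\ge\lambda$ forces $\lambda_{i_0}=\lambda$, i.e. $\bar{z}_{a_k}\to x_0\in\mathcal{Z}$; the two bounds then coincide, giving $(\ref{energy})$.

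\textbf{Blow-up rate and refined convergence.} Since $\lambda_{i_0}=\lambda$, the chain of inequalities above is an asymptotic equality, and (using once more $\varepsilon_{a_k}^{q}=O((a^{*}-a_{k})^{q/(p+q)})$ to absorb the error) $g_\lambda(\varepsilon_{a_k})/\min_{\varepsilon}g_\lambda\to1$. Writing $t_k=\varepsilon_{a_k}/\sigma_k$ and using the stationarity $p\,\tfrac{a^{*}-a_{k}}{a^{*}}\sigma_k^{-p}=q\,\lambda\,{a^{*}}^{-n/p}\sigma_k^{q}$, this ratio equals $\bigl(q\,t_k^{-p}+p\,t_k^{q}\bigr)/(p+q)$, which is $\ge1$ with equality only at $t_k=1$; hence $t_k\to1$, which is $(\ref{sigma})$. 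Finally $\sigma_k^{n/p}u_{a_k}(\sigma_k y+\bar{z}_{a_k})=t_k^{n/p}w_k(t_k y)$ with $t_k\to1$, so the strong continuity of dilations on $W^{1,p}(\mathbb{R}^{n})$ together with $w_k\to{a^{*}}^{-n/p^{2}}Q$ gives $(\ref{jixian2})$; the assertion $\bar{z}_{a_k}\to x_0\in\mathcal{Z}$ was established above, and the same reasoning applies to any global maximum point of $u_{a_k}$.

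\textbf{Main difficulty.} The trial-function computation is routine. The delicate point is the lower estimate $\liminf_k\varepsilon_{a_k}^{-q}\int V|u_{a_k}|^p\ge\lambda\,{a^{*}}^{-n/p}$, which has to be obtained with no a priori control on the rate at which $\bar{z}_{a_k}-x_{i_0}\to0$; taking the infimum over the free translation $\zeta$ of the \emph{truncated} integral circumvents this, but it must be interlocked with $(\ref{decay})$ (so truncation costs nothing as $R\to\infty$), with the a priori order $\varepsilon_{a_k}\asymp(a^{*}-a_{k})^{1/(p+q)}$ (so every error term is $o(e(a_k))$), and with $(\ref{y_0})$ (so the constant that emerges is exactly $\lambda_{i_0}$ rather than a larger multiple). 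Making these fit together — and, beforehand, excluding $q_{i_0}<q$ by the exponent comparison — is where the ``delicate energy estimates'' promised in the introduction are really used.
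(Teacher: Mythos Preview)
Your argument is correct and follows the same overall architecture as the paper: a trial-function upper bound (the paper's Lemma 5.1) matched against a Gagliardo--Nirenberg lower bound, with the comparison forcing first $q_{i_0}=q$, then $\lambda_{i_0}=\lambda$, then $\varepsilon_{a_k}/\sigma_k\to 1$. Two tactical differences are worth noting. First, you drop the cutoff $\varphi$ in the trial function; this is legitimate here because $V$ in (\ref{polynomial V}) grows only polynomially while $Q$ decays exponentially, so $u_\tau\in\mathcal{H}$ directly---the paper keeps the cutoff only because it recycles the computation from the proof of Theorem \ref{mainresult}, which treats the general condition $(V)$. Second, and more substantively, in the lower bound the paper first shows $|\bar z_{a_k}-x_i|/\varepsilon_{a_k}$ is bounded (by contradiction with Lemma 5.1), then applies Fatou along a subsequence where this ratio converges, landing on $\int|x+\eta|^q w_0^p\ge\int|x+y_0|^q w_0^p$. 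You instead truncate to $B_R$ and take $\inf_\zeta\int_{B_R}|y+\zeta|^{q_{i_0}}w_k^p$ from the outset, which sidesteps any a priori control on $(\bar z_{a_k}-x_{i_0})/\varepsilon_{a_k}$; the price is that you must verify that this truncated infimum converges (as $k\to\infty$, then $R\to\infty$) to the full $\inf_\zeta\int|y+\zeta|^q w_0^p$, which it does because the minimising $\zeta$ stay bounded. Both routes feed into (\ref{y_0}) at the same point. One cosmetic slip: with $t_k=\varepsilon_{a_k}/\sigma_k$ the correct identity is $\sigma_k^{n/p}u_{a_k}(\sigma_k y+\bar z_{a_k})=t_k^{-n/p}w_k(t_k^{-1}y)$, not $t_k^{n/p}w_k(t_ky)$; since $t_k\to1$ this does not affect the conclusion.
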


\section{Preliminary lemmas}
\noindent

In this section, we give some useful lemmas which are required in next section.

\begin{lemma}\label{Gagliardo-Nirenberg inequality}
$(\text{Gagliardo-Nirenberg inequality})$ Let $p\in (1,n)$, $s=p+\frac{p^2}{n}$ and $a^*$ be given by  (\ref{a*}). Then, for any
$u\in W^{1,p}(\mathbb{R}^{n})$, there holds

\begin{equation}\label{GNinequality}
\int_{\mathbb{R}^{n}} |u(x)|^s dx  \leq \frac{n+p}{n a^*}
\int_{\mathbb{R}^{n}}|\nabla u(x)|^p dx \cdot \Big(\int_{\mathbb{R}^{n}}|u(x)|^p dx\Big)^{\frac{p}{n}}.
\end{equation}
Moreover, the equality holds if and only if $u(x)=c_1 Q(c_2 x)$ for some $c_1,c_2 \in \mathbb{R}\setminus\{0\}$ and $Q\in \mathcal{G}$.
\end{lemma}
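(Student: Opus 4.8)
\noindent The plan is to identify the sharp constant and the extremals of (\ref{GNinequality}) through the minimization of the Gagliardo--Nirenberg quotient, whose Euler--Lagrange equation is exactly the limit equation (\ref{limit equation}). Set
\[
\alpha:=\inf_{u\in W^{1,p}(\mathbb{R}^n)\setminus\{0\}}J(u),\qquad J(u):=\frac{\big(\int_{\mathbb{R}^n}|\nabla u|^p\,dx\big)\big(\int_{\mathbb{R}^n}|u|^p\,dx\big)^{p/n}}{\int_{\mathbb{R}^n}|u|^s\,dx}.
\]
First I would note that $J$ is invariant under the two rescalings $u\mapsto tu$ and $u\mapsto u(\lambda\cdot)$ (this is precisely where $s=p+p^2/n$ enters), and that $\alpha\in(0,\infty)$: finiteness follows by testing with any fixed function, and positivity is the classical non-sharp Gagliardo--Nirenberg interpolation inequality together with the embedding $W^{1,p}(\mathbb{R}^n)\hookrightarrow L^s(\mathbb{R}^n)$. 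With this notation, (\ref{GNinequality}) is equivalent to the identity $\alpha=\frac{na^*}{n+p}$.

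The central step is to show that $\alpha$ is attained. Using the two scalings I would normalize a minimizing sequence so that $\int|u_k|^p\,dx=\int|u_k|^s\,dx=1$, so that $\int|\nabla u_k|^p\,dx\to\alpha$ and $(u_k)$ is bounded in $W^{1,p}$; since passing from $u_k$ to $|u_k|$ does not increase $\int|\nabla u_k|^p\,dx$, one may assume $u_k\ge 0$. Applying Lions' concentration--compactness principle, vanishing is excluded because it would force $\|u_k\|_{L^s}\to 0$, and dichotomy is excluded by the usual strict-subadditivity argument (equivalently, Lieb's translation lemma produces a nonzero weak limit directly). Hence, after suitable translations, $u_k\rightharpoonup\bar u\ne 0$ weakly in $W^{1,p}$, and the Brezis--Lieb lemma together with weak lower semicontinuity gives $\int|\bar u|^p\,dx\le 1$, $\int|\bar u|^s\,dx=1$ and $\int|\nabla\bar u|^p\,dx\le\alpha$, whence $J(\bar u)\le\alpha$ and $\bar u\ge 0$ is a minimizer.

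Next, a minimizer $\bar u$ solves $-\Delta_p\bar u=c_1|\bar u|^{s-2}\bar u-c_2|\bar u|^{p-2}\bar u$ for some Lagrange multipliers $c_1,c_2$; testing this equation with $\bar u$ and combining with the Pohozaev identity determines the signs and yields $c_1>0$, $c_2>0$. Applying the two admissible rescalings I then obtain $\hat u=t\,\bar u(\lambda\cdot)$ solving (\ref{limit equation}), i.e. $\hat u\in\mathcal{S}$, and $\hat u$ is again a minimizer of $J$ by scale invariance. By (\ref{Q}), every $v\in\mathcal{S}$ satisfies $J(v)=\frac{n}{n+p}\big(\int_{\mathbb{R}^n}|v|^p\,dx\big)^{p/n}$, so minimality of $\hat u$ forces $\int_{\mathbb{R}^n}|\hat u|^p\,dx\le\int_{\mathbb{R}^n}|v|^p\,dx$ for all $v\in\mathcal{S}$; by (\ref{jitai}) this means $\hat u\in\mathcal{G}$, and since (\ref{a*}) and the remark after Theorem \ref{mainresult} give $\big(\int_{\mathbb{R}^n}|\hat u|^p\,dx\big)^{p/n}=a^*$, we conclude $\alpha=J(\hat u)=\frac{na^*}{n+p}$, which is (\ref{GNinequality}).

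Finally, for the equality statement: if equality holds in (\ref{GNinequality}) then $u$ minimizes $J$, hence so does $|u|$, with the same $W^{1,p}$ and $L^s$ norms; rescaling as above, $|u|\in\mathcal{G}$, and by elliptic regularity and the strong maximum principle for the $p$-Laplacian $|u|$ is continuous and strictly positive, so $u$ has constant sign and $u=\pm|u|$. Undoing the scaling then gives $u(x)=c_1Q(c_2x)$ with $Q\in\mathcal{G}$, translations being absorbed into $\mathcal{G}$; the converse is immediate since $J$ is scale-invariant and, by the computation above, equals $\alpha$ on $\mathcal{G}$. I expect the compactness step to be the main obstacle---in particular, excluding dichotomy in the concentration--compactness argument and verifying (via Pohozaev) that $c_1,c_2>0$, so that the minimizer really can be rescaled into a solution of (\ref{limit equation}).
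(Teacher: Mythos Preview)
Your argument is correct, and the connection you draw between minimizers of $J$ and $\mathcal{G}$ via the identity $J(v)=\tfrac{n}{n+p}\big(\int|v|^p\big)^{p/n}$ for $v\in\mathcal{S}$ is exactly the right mechanism. The paper, however, takes a shorter route: rather than proving attainment of $\alpha$ from scratch by concentration--compactness, it simply quotes Theorem~2.1 of Agueh~\cite{Sharp GN}, which already supplies the sharp inequality together with a minimizer $u_\infty$ of the constrained problem $\inf\{E(u):\|u\|_{L^s}=1\}$. The paper then rescales $u_\infty$ into a solution of (\ref{limit equation}), compares its $L^p$-norm with $a^{*n/p}$ via the definition of $\mathcal{G}$, and reads off the constant. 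Your approach is more self-contained and makes the role of the Pohozaev relations (\ref{Q}) more transparent, at the cost of redoing the compactness analysis that Agueh's paper already contains; the paper's approach is quicker but hides that analysis in a citation. The treatment of the equality case is essentially the same in both: derive the Euler--Lagrange equation with positive coefficients, rescale into (\ref{limit equation}), and invoke (\ref{Q}) and (\ref{a*}) to land in $\mathcal{G}$. One minor remark: you do not actually need Pohozaev to see $c_1,c_2>0$; computing $J'(\bar u)=0$ directly under your normalization $\int|\bar u|^p=\int|\bar u|^s=1$ gives $c_2=\alpha p/n$ and $c_1=\alpha s/p$ explicitly.
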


\begin{proof}
By using Theorem 2.1 of \cite{Sharp GN} with $q=p $ and  $s=p+\frac{p^2}{n}$, we see that
\begin{equation}\label{lemma2.1 1}
\int_{\mathbb{R}^{n}} |u(x)|^s \leq \Big(  \frac{K}{E(u_{\infty})}  \Big)^{\frac{n+p}{n}}
\int_{\mathbb{R}^{n}}|\nabla u(x)|^p\cdot \Big(\int_{\mathbb{R}^{n}}|u(x)|^p\Big)^{\frac{p}{n}},
\end{equation}
where $K=\frac{n+p}{np(\frac{p}{n})^{\frac{p}{n+p}}}$ and $u_{\infty}$ is a minimizer of the following constrained minimization problem:
\begin{equation}\label{lemma2.1 2}
    \inf \Big\{E(u)=  \frac{1}{p}\int_{\mathbb{R}^{n}}|\nabla u(x)|^p dx + \frac{1}{p} \int_{\mathbb{R}^{n}}|u(x)|^p  dx :u\in W^{1,p}(\mathbb{R}^{n})
    , \|u\|_{L^s(\mathbb{R}^{n})}=1 \Big\},
\end{equation}

Since $u_\infty$ is a minimizer of (\ref{lemma2.1 2}), then $u_\infty$ satisfies
\begin{equation*}\label{lemma2.1 3}
     -\Delta_{p}u_\infty+|u_\infty|^{p-2}u_\infty= \lambda |u_\infty|^{s-2}u_\infty,
\end{equation*}
where
\begin{equation}\label{lemma2.1 4}
    \lambda=p E(u_\infty),
\end{equation}
is the so called Lagrange multiplier. By the Pohozaev identity\cite{Poho}, we  have
\begin{equation}\label{lemma2.1 5}
    \int_{\mathbb{R}^{n}} |u_\infty|^p  dx=\frac{p}{n}\int_{\mathbb{R}^{n}}|\nabla u_\infty|^p dx=\frac{\lambda p}{n+p}\int_{\mathbb{R}^{n}} |u_\infty|^s  dx.
\end{equation}
Let
\begin{equation}\label{lemma2.1 6}
    v(x)=(\frac{p \lambda}{n})^{\frac{n}{p^2}} u_{\infty} \big(    (\frac{p}{n})^{\frac{1}{p}}        x \big),
\end{equation}
then $v$ satisfies  (\ref{limit equation}). By the definition of ground state and (\ref{a*}), it follows from (\ref{lemma2.1 6}) that
\begin{equation}\label{lemma2.1 8}
  \lambda^{\frac{n}{p}}\int_{\mathbb{R}^{n}} |u_{\infty}|^p  dx= \int_{\mathbb{R}^{n}} |v|^p  dx \geq {a^*}^{\frac{n}{p}}.
\end{equation}
Hence, (\ref{lemma2.1 5}) and (\ref{lemma2.1 8}) together with   $\int_{\mathbb{R}^{n}} |u_{\infty}|^s =1 $ imply that
\begin{equation}\label{lemma2.1 9}
   \lambda \geq (\frac{n+p}{p})^{\frac{p}{n+p}} {a^*}^{\frac{n}{n+p}}.
\end{equation}
So,  (\ref{GNinequality}) holds by using (\ref{lemma2.1 1}), (\ref{lemma2.1 4}) and (\ref{lemma2.1 9}).

Next, we claim that any $Q\in \mathcal{G}$ is an extremal function  of (\ref{GNinequality}).

Indeed, if $Q\in \mathcal{G}$ then $Q$ is a ground state  of (\ref{limit equation}),  and   (\ref{Q}) and (\ref{a*}) hold, that is,
\begin{equation*}
  \frac{n} {n+p} \int_{\mathbb{R}^{n}} |Q|^s dx=    \int_{\mathbb{R}^{n}} |\nabla Q|^p dx=   \int_{\mathbb{R}^{n}} | Q|^p dx=
   {a^*}^{\frac{n}{p}}.
\end{equation*}
Therefore,   $Q$ satisfies the equality   of (\ref{GNinequality}), so does $c_1 Q(c_2 x)$ for any $c_1, c_2 \in \mathbb{R}\backslash\{0\}$.


Now, let $u$ be an extremal of (\ref{GNinequality}), then by a similar arguments to  \cite{weinstein} we know that $u$ satisfies
\begin{equation*}
    -\Delta_{p}u+a_{1}|u|^{p-2}u -a_{2}|u|^{s-2}u=0,
\end{equation*}
for some $a_{1},a_{2}>0$. Let $\hat{u}(x)=(\frac{pa_2}{na_1})^{\frac{n}{p^2}}u\big( (\frac{p}{na_1})^{\frac{1}{p}} x  \big)$,  then  $\hat{u}$ satisfies
 \begin{equation*}
-\Delta_{p}\hat{u}+\frac{p}{n}|\hat{u}|^{p-2}\hat{u}-|\hat{u}|^{s-2}\hat{u}=0,
\end{equation*}
and $\hat{u}$ is also an extremal of (\ref{GNinequality}). Then by   (\ref{Q}) we have
\begin{equation*}
     \int_{\mathbb{R}^{n}} |\hat{u}|^p =(a^*)^{\frac{n}{p}}.
\end{equation*}
Thus, $\hat{u} \in \mathcal{G} $ and $u=(\frac{na_1}{pa_2})^{\frac{n}{p^2}}\hat{u}\big( (\frac{na_1}{p})^{\frac{1}{p}} x  \big) $.

\end{proof}

Using Lemma \ref{Gagliardo-Nirenberg inequality} one can quickly get the following result.

\begin{lemma}\label{lemma 2.2}
Let $0\leq w_{0}\in W^{1,p}(\mathbb{R}^{n})$ satisfy the equation
\begin{equation}\label{lemma 2.2.1}
     -\Delta_{p} w_0+\frac{p}{n}{w_0}^{p-1}=a^{*} {w_0}^{s-1},
\end{equation}
and
\begin{equation}\label{lemma 2.2.2}
 \int_{\mathbb{R}^{n}} |\nabla w_0|^p dx= \int_{\mathbb{R}^{n}} | w_0|^p dx=1.
\end{equation}
Then, $w_0$ satisfies the equality of (\ref{GNinequality})  and  $w_0={a^{*}}^{-\frac{n}{p^2}}Q(x)$ for some
$Q\in \mathcal{G}$.
\end{lemma}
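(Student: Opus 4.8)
The plan is to show that the pair of conditions \eqref{lemma 2.2.1}--\eqref{lemma 2.2.2} forces $w_0$ to saturate the Gagliardo--Nirenberg inequality \eqref{GNinequality}, and then invoke the equality case of Lemma \ref{Gagliardo-Nirenberg inequality} together with the normalization to pin down the scaling constants. First I would record the Pohozaev-type identities for \eqref{lemma 2.2.1}: testing the equation against $w_0$ gives $\int|\nabla w_0|^p + \tfrac{p}{n}\int w_0^p = a^*\int w_0^s$, and the Pohozaev identity \eqref{Q} (applied to this rescaled equation, exactly as in the derivation of \eqref{Q} in the introduction) gives $\int w_0^s = (1+\tfrac{p}{n})\int |\nabla w_0|^p = (1+\tfrac{p}{n})\int w_0^p$. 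Using the normalization \eqref{lemma 2.2.2}, i.e. $\int|\nabla w_0|^p=\int w_0^p=1$, these become $\int_{\mathbb{R}^n} w_0^s\,dx = \frac{n+p}{n}$.

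Next I would simply substitute these values into the right-hand side of \eqref{GNinequality}:
\[
\frac{n+p}{na^*}\int_{\mathbb{R}^n}|\nabla w_0|^p\,dx\cdot\Big(\int_{\mathbb{R}^n} w_0^p\,dx\Big)^{p/n} = \frac{n+p}{na^*}\cdot 1\cdot 1 = \frac{n+p}{na^*}.
\]
But this is exactly $\int_{\mathbb{R}^n} w_0^s\,dx$ computed above divided by $a^*$... wait, I need to be careful here: the computed value is $\int w_0^s = \frac{n+p}{n}$, so equality in \eqref{GNinequality} would require $\frac{n+p}{n} = \frac{n+p}{na^*}$, forcing $a^*=1$, which is false in general. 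The correct observation is that $w_0$ need not be normalized to have $\int w_0^p = a^{*n/p}$; rather, \eqref{lemma 2.2.2} is a different normalization. So instead I would directly check equality in \eqref{GNinequality}: with $\int|\nabla w_0|^p = \int w_0^p = 1$ and $\int w_0^s = \frac{n+p}{n}$, the inequality \eqref{GNinequality} is an identity precisely when $a^* = 1$. Since that is not the hypothesis, the right approach is to compare with the known extremal $Q\in\mathcal{G}$, which satisfies $\int|\nabla Q|^p = \int Q^p = a^{*n/p}$ and $\int Q^s = \frac{n+p}{n}a^{*n/p}$, and show $w_0$ is a rescaling of such a $Q$.

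Concretely, I would argue that $w_0$ is itself a least-energy solution (hence in the class producing extremals): since $w_0$ solves \eqref{lemma 2.2.1}, the function $\tilde w_0(x) := c_1 w_0(c_2 x)$ with $c_1,c_2$ chosen so that $\tilde w_0$ solves the limit equation \eqref{limit equation} lies in $\mathcal{S}$; by \eqref{Q}, $\int|\tilde w_0|^p = \int|\nabla\tilde w_0|^p$, and by the sharp-constant characterization in Lemma \ref{Gagliardo-Nirenberg inequality} together with the fact that $w_0$ (equivalently $\tilde w_0$) achieves the constant $\frac{n+p}{na^*}$ — which I verify by plugging the Pohozaev values into both sides after the correct rescaling — we get $\tilde w_0 \in \mathcal{G}$. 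Equivalently, one applies the equality clause of Lemma \ref{Gagliardo-Nirenberg inequality} directly to $w_0$: once equality in \eqref{GNinequality} is confirmed, $w_0(x) = c_1 Q(c_2 x)$ for some $Q\in\mathcal{G}$ and nonzero constants $c_1,c_2$. Finally I would use the two normalization conditions \eqref{lemma 2.2.2} — namely $\int|\nabla w_0|^p = 1$ and $\int w_0^p = 1$ — together with the scaling relations $\int|\nabla(c_1 Q(c_2\cdot))|^p = c_1^p c_2^{p-n}\int|\nabla Q|^p$ and $\int|c_1 Q(c_2\cdot)|^p = c_1^p c_2^{-n}\int Q^p$, plus $\int Q^p = \int|\nabla Q|^p = a^{*n/p}$, to solve for $c_1,c_2$: dividing the two relations gives $c_2^p = 1$, so $c_2 = 1$ (taking the positive root, which is WLOG since $Q$ is even up to translation in the relevant sense), and then $c_1^p a^{*n/p} = 1$, i.e. $c_1 = a^{*-n/p^2}$. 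This yields $w_0 = a^{*-n/p^2} Q$, as claimed.

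The main obstacle I anticipate is the bookkeeping in the rescaling step: one must correctly track how the equation \eqref{lemma 2.2.1} (with coefficients $\tfrac{p}{n}$ and $a^*$) relates by scaling to the normalized limit equation \eqref{limit equation} (with coefficients $\tfrac{p}{n}$ and $1$), and simultaneously check that the chosen scaling is consistent with the normalization \eqref{lemma 2.2.2} rather than over-determined — the content of the lemma is precisely that it is consistent and uniquely determines $w_0$. A secondary subtlety is justifying that an extremal of \eqref{GNinequality} with the given normalizations must in fact solve \eqref{lemma 2.2.1} with exactly these coefficients (not merely up to scaling); this follows from the equality case already established in Lemma \ref{Gagliardo-Nirenberg inequality} combined with uniqueness of the scaling forced by \eqref{lemma 2.2.2}. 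Everything else is a routine substitution of the Pohozaev identities.
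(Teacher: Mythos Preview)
Your overall strategy is the same as the paper's, and your final paragraph solving for $c_1,c_2$ is correct. The confusion in the middle of your proposal comes from a single computational slip: when you transport the Pohozaev identity \eqref{Q} to equation \eqref{lemma 2.2.1}, you drop the coefficient $a^*$. For the equation $-\Delta_p w_0 + \tfrac{p}{n}w_0^{p-1} = a^* w_0^{s-1}$ the correct identity is $a^*\int w_0^s = (1+\tfrac{p}{n})\int|\nabla w_0|^p$, not $\int w_0^s = (1+\tfrac{p}{n})\int|\nabla w_0|^p$. Indeed, you already wrote the Nehari identity correctly as $\int|\nabla w_0|^p + \tfrac{p}{n}\int w_0^p = a^*\int w_0^s$; with \eqref{lemma 2.2.2} this alone gives $\int w_0^s = \tfrac{n+p}{na^*}$, and now equality in \eqref{GNinequality} is immediate since the right-hand side is $\tfrac{n+p}{na^*}\cdot 1\cdot 1^{p/n}$. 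There is no need for the Pohozaev identity at all, nor for the detour through rescaling to \eqref{limit equation}.

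This is exactly the paper's argument: test \eqref{lemma 2.2.1} against $w_0$, use \eqref{lemma 2.2.2} to get $\int w_0^s = \tfrac{n+p}{na^*}$, observe equality in \eqref{GNinequality}, invoke Lemma~\ref{Gagliardo-Nirenberg inequality} to write $w_0 = c_1 Q(c_2 x)$, and solve for $c_1,c_2$ from the two normalizations (your computation $c_2=1$, $c_1 = a^{*-n/p^2}$ is correct). The ``obstacle'' you anticipate about over-determination disappears once $\int w_0^s$ is computed correctly.
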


\begin{proof}
 By (\ref{lemma 2.2.1}) and (\ref{lemma 2.2.2}), it is easy to see that
\begin{equation}\label{lemma 2.2.3}
    \int_{\mathbb{R}^{n}} | w_0|^s dx=\frac{n+p}{na^*},
\end{equation}
 using again (\ref{lemma 2.2.2}) we konw that $w_0$ satisfies the equality of (\ref{GNinequality}). Then,  Lemma \ref{Gagliardo-Nirenberg inequality} implies  that there exist $c_1,c_2>0$  such that
\begin{equation*}
    w_0=c_1 Q(c_2 x ),
\end{equation*}
for some $Q\in \mathcal{G}$.  Using (\ref{lemma 2.2.2}) and (\ref{lemma 2.2.3}) together with (\ref{Q}) and
 (\ref{a*}), we get  $c_1={a^{*}}^{-\frac{n}{p^2}} $ and $c_2=1 $, thus the proof is completed.
\end{proof}

\begin{lemma}\label{compactlemma}
	Suppose $V\in L^{\infty}_{\text{loc}}(\mathbb{R}^n)$
	with $\lim\limits_{|x|\rightarrow \infty}V(x)=\infty$,
	then the embedding $\mathcal{H} \hookrightarrow L^q( \mathbb{R}^n )$
	is compact, for any $p\leq q <p^{*}=\left\{
   \begin{array}{ll}
     \frac{np}{n-p},  p<n,\\
     +\infty , p\geq n.
   \end{array}
   \right.$
\end{lemma}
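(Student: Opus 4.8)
The plan is to exploit the coercivity of $V$ at infinity to split $\mathbb{R}^n$ into a large ball $B_R$, where the usual Rellich–Kondrachov compactness applies, and the complement $B_R^c$, where the large values of $V$ force the $L^p$-tails of any bounded sequence in $\mathcal{H}$ to be uniformly small. First I would take a bounded sequence $\{u_k\}\subset\mathcal{H}$, say $\|u_k\|_{\mathcal{H}}\le M$; since $\mathcal{H}$ is reflexive (it is a closed subspace of $W^{1,p}(\mathbb{R}^n)$ under an equivalent-looking norm, using $\inf V = 0$ or just $V\ge -C$ locally bounded below), we may pass to a subsequence with $u_k\rightharpoonup u$ weakly in $\mathcal{H}$, hence weakly in $W^{1,p}(\mathbb{R}^n)$ and, by Rellich, strongly in $L^q_{\text{loc}}(\mathbb{R}^n)$ for every $q<p^*$. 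The goal is to upgrade this to strong convergence in $L^q(\mathbb{R}^n)$.

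The key step is the tail estimate. Given $\eta>0$, use $\lim_{|x|\to\infty}V(x)=\infty$ to choose $R$ so large that $V(x)\ge \eta^{-1}$ for $|x|\ge R$. Then for any $v\in\mathcal{H}$,
\begin{equation*}
\int_{|x|\ge R}|v|^p\,dx \le \eta\int_{|x|\ge R}V(x)|v|^p\,dx \le \eta\,\|v\|_{\mathcal{H}}^p,
\end{equation*}
so the $L^p$-mass of $\{u_k - u\}$ outside $B_R$ is $\le \eta(2M)^p$, uniformly in $k$. To handle a general exponent $p\le q<p^*$, interpolate: write $\frac1q=\frac{\theta}{p}+\frac{1-\theta}{p^*}$ with $\theta\in(0,1]$, and combine the small $L^p$-tail with the uniform $L^{p^*}$-bound coming from the Sobolev inequality $\|u_k-u\|_{L^{p^*}}\le C\|\nabla(u_k-u)\|_{L^p}\le C'M$ (when $p<n$; the case $p\ge n$ is excluded here since $p\in(1,n)$ in the paper, though the lemma as stated also covers $p\ge n$ with $p^*=\infty$, where one instead picks an auxiliary exponent $\tilde q\in(q,\infty)$ and uses the local embedding into $L^{\tilde q}$). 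This gives $\|u_k-u\|_{L^q(B_R^c)}^q\le \big(\eta(2M)^p\big)^{\theta}(C'M)^{q(1-\theta)}$, which is small uniformly in $k$ once $\eta$ is small. On the ball, $\|u_k-u\|_{L^q(B_R)}\to 0$ by the local strong convergence already obtained. Splitting $\|u_k-u\|_{L^q(\mathbb{R}^n)}^q = \|u_k-u\|_{L^q(B_R)}^q + \|u_k-u\|_{L^q(B_R^c)}^q$ and letting first $k\to\infty$ then $\eta\to 0$ yields $u_k\to u$ in $L^q(\mathbb{R}^n)$.

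The main obstacle is the bookkeeping around the exponent range and the reflexivity/weak-compactness of $\mathcal{H}$: one must check that $\|\cdot\|_{\mathcal{H}}$ really controls $\|\cdot\|_{W^{1,p}}$ (needing $V$ bounded below, which follows from $\inf V=0$, or at least $V\ge -C_{\text{loc}}$ together with $V\to\infty$) so that weak limits and the local Rellich step are legitimate, and that the interpolation inequality is applied with the correct $\theta$ so the uniform $L^{p^*}$ (or $L^{\tilde q}$) bound can absorb the non-small factor. Everything else is the standard "truncate, use local compactness, control the tail" routine.
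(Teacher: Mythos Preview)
Your proposal is correct and is precisely the standard ``coercive potential controls the $L^p$-tails + local Rellich--Kondrachov + interpolation'' argument; the paper does not write out a proof at all but simply refers to \cite{Zhangjian} and \cite{wangzhiqiang} (the $p=2$ case), whose proofs follow exactly this scheme. Aside from the minor exponent bookkeeping in your interpolation step (the power on $\eta$ should be $q\theta/p$ rather than $\theta$), nothing is missing.
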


\begin{proof}
This lemma can be proved by almost the same way as  that of Lemma 5.1  in  \cite{Zhangjian}  or section 3 of \cite{wangzhiqiang}, where only $p=2$ is considered.
\end{proof}

\begin{lemma}\label{lemma 2.4}
If $u_a$ is a minimizer of problem (\ref{minimizing problem}), then $u_a$ is a ground state  of (\ref{Equation}) for
some $\mu= \mu_a$.
\end{lemma}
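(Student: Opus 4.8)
**The plan is to show that a minimizer of the constrained problem is, after the usual Lagrange-multiplier argument, a weak solution of (\ref{Equation}), and then to verify it is of least energy among all such solutions.** First I would apply the Lagrange multiplier theorem to the $C^1$ functional $E_a$ restricted to the constraint manifold $\mathcal{M}=\{u\in\mathcal{H}:\int_{\mathbb{R}^n}|u|^p\,dx=1\}$. Since $u_a$ minimizes $E_a$ over $\mathcal{M}$, there exists $\mu_a\in\mathbb{R}$ with $E_a'(u_a)=\mu_a G'(u_a)$, where $G(u)=\int_{\mathbb{R}^n}|u|^p\,dx$; unravelling the Gateaux derivatives, this is precisely the statement that $u_a$ is a weak solution of
\begin{equation*}
-\Delta_p u_a+V(x)|u_a|^{p-2}u_a=\mu_a|u_a|^{p-2}u_a+a|u_a|^{s-2}u_a
\end{equation*}
in the sense that $\int_{\mathbb{R}^n}\big(|\nabla u_a|^{p-2}\nabla u_a\cdot\nabla\varphi+V|u_a|^{p-2}u_a\varphi\big)\,dx=\int_{\mathbb{R}^n}\big(\mu_a|u_a|^{p-2}u_a+a|u_a|^{s-2}u_a\big)\varphi\,dx$ for all $\varphi\in\mathcal{H}$. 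I would also record the value of $\mu_a$ by testing with $\varphi=u_a$: $\mu_a=E_a(u_a)+\frac{a}{n}\int_{\mathbb{R}^n}|u_a|^s\,dx = e(a)+\frac{a}{n}\int_{\mathbb{R}^n}|u_a|^s\,dx$, using the definition (\ref{GP}) of $E_a$ and $s=p+\frac{p^2}{n}$.

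Next I would need to pin down the notion of ``ground state of (\ref{Equation})'' being used: it should mean a weak solution (for some $\mu$) of minimal energy among all nontrivial weak solutions carrying the normalization $\|u\|_{L^p}=1$, with energy measured by $E_a$ (equivalently, by $e(\cdot)$). With that reading, if $v$ is any weak solution of (\ref{Equation}) with $\|v\|_{L^p}=1$ and multiplier $\mu$, then $v$ is in particular an admissible competitor in (\ref{minimizing problem}), so $E_a(v)\ge e(a)=E_a(u_a)$; hence $u_a$ has least energy among this class, i.e. $u_a$ is a ground state. This direction is essentially a tautology once the definitions are aligned, so the content of the lemma is really the first half — that a minimizer is actually a solution at all — which rests on the Lagrange multiplier rule plus the regularity/integrability needed to make the weak formulation meaningful for all test functions in $\mathcal{H}$ (in particular that $|u_a|^{s-1}\in L^{p'}_{\mathrm{loc}}$ and pairs against $\mathcal{H}$, which follows from the Gagliardo--Nirenberg inequality of Lemma \ref{Gagliardo-Nirenberg inequality} and the compact embedding of Lemma \ref{compactlemma}).

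**The main obstacle** I anticipate is technical rather than conceptual: justifying that $E_a$ and the constraint functional are $C^1$ on $\mathcal{H}$ with the claimed derivatives, so that the Lagrange multiplier theorem genuinely applies, and that the resulting Euler--Lagrange identity extends from a dense class to all $\varphi\in\mathcal{H}$. The nonlinear term $\frac{pa}{s}\int|u|^s$ and the potential term $\int V|u|^p$ both require a little care — differentiability of $u\mapsto\int|u|^s$ on $W^{1,p}(\mathbb{R}^n)$ uses the subcritical Sobolev embedding ($s<p^*$), and differentiability of $u\mapsto\int V|u|^p$ uses the finiteness built into the definition of $\mathcal{H}$ together with dominated convergence. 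One should also note, as the paper does elsewhere, that since $|\nabla|u||\le|\nabla u|$ a.e. one may take $u_a\ge0$, which is convenient but not strictly needed here. Modulo these standard verifications, the proof is short, and I would present it in the order: (1) invoke the Lagrange multiplier rule to get $\mu_a$ and the weak equation; (2) compute $\mu_a$ explicitly by testing with $u_a$; (3) conclude minimality of $E_a(u_a)$ among normalized weak solutions, hence $u_a$ is a ground state.
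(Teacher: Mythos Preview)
Your Lagrange-multiplier step is fine, but your reading of ``ground state'' is not the one the paper uses, and this is where the proposal loses its content. In line with the definition given for the limit equation (\ref{limit equation}) (see (\ref{jitai})), a ground state of (\ref{Equation}) with the fixed multiplier $\mu_a$ is a least-energy solution among \emph{all} nontrivial weak solutions of that equation---with no $L^p$ normalization imposed---where the energy is the action functional
\[
J_a(u)=\frac{1}{p}\int_{\mathbb{R}^n}\bigl(|\nabla u|^p+(V(x)-\mu_a)|u|^p\bigr)\,dx-\frac{a}{s}\int_{\mathbb{R}^n}|u|^s\,dx.
\]
Under your reading (only normalized competitors, energy $E_a$), minimality is indeed a tautology; under the paper's reading it is not, because a solution $v$ of (\ref{Equation}) with $\mu=\mu_a$ need not satisfy $\|v\|_{L^p}=1$ and hence is not a priori an admissible competitor in (\ref{minimizing problem}).

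The paper closes this gap with a short rescaling argument you are missing. Testing the equation with $v$ gives $J_a(v)=\frac{a}{n+p}\int_{\mathbb{R}^n}|v|^s$, and likewise for $u_a$. Setting $d=\|v\|_{L^p}$ and $\bar v=v/d$, the constrained minimality of $u_a$ yields $J_a(\bar v)\ge J_a(u_a)$, since $J_a=\frac{1}{p}E_a-\frac{\mu_a}{p}$ on the unit $L^p$-sphere. The remaining inequality $J_a(\bar v)\le J_a(v)$ comes from computing $J_a(\bar v)=\bigl(\tfrac{1}{pd^{p}}-\tfrac{1}{sd^{s}}\bigr)a\int_{\mathbb{R}^n}|v|^s$ and observing that $d\mapsto \tfrac{1}{pd^{p}}-\tfrac{1}{sd^{s}}$ is maximized at $d=1$ with value $\tfrac{1}{p}-\tfrac{1}{s}=\tfrac{1}{n+p}$. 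Chaining these gives $J_a(u_a)\le J_a(v)$, which is the genuine content of the lemma.
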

\begin{proof}
Let $u_a$ be a minimizer of problem (\ref{minimizing problem}), then there is a $\mu_a$, i.e., the so called Lagrange multiplier, such that
\begin{equation}\label{lemma2.4 EL}
       -\Delta_{p}u_{a}+V(x)|u_{a}|^{p-2}u_{a}=\mu_{a}|u_{a}|^{p-2}u_{a}+a|u_{a}|^{s-2}u_{a}.
\end{equation}
Define
\begin{equation}\label{lemma2.4 J}
    J_a(u)=\frac{1}{p} \int_{\mathbb{R}^n} \big[ |\nabla u|^p +(V(x)-\mu_a)|u|^p \big] dx-\frac{a}{s}\int_{\mathbb{R}^n}|u|^s dx.
\end{equation}
Then, to prove $u_a$ is a ground state of (\ref{Equation}), we need only to show that $J_a(u_a)\leq J_a(v)$ for any nontrivial weak solution
$v$  of (\ref{lemma2.4 EL}). For this purpose, let $v(x)\not\equiv 0$ be solution of  (\ref{lemma2.4 EL}), then we see that
\begin{equation}\label{lemma2.4 *}
    \int_{\mathbb{R}^n}  \big[ |\nabla v|^p +(V(x)-\mu_a)|v|^p \big] dx = a \int_{\mathbb{R}^n}|v|^s dx.
\end{equation}
It follows from (\ref{lemma2.4 J}) and (\ref{lemma2.4 *}) that
\begin{equation*}\label{lemma2.4 sanjiao}
     J_a(v)=(\frac{1}{p}-\frac{1}{s})a\int_{\mathbb{R}^n}|v|^s dx=\frac{a}{n+p}\int_{\mathbb{R}^n}|v|^s dx.
\end{equation*}
Since $u_a$ satisfies (\ref{lemma2.4 EL}), then (\ref{lemma2.4 *}) holds also for $u_a$, this implies  that
\begin{equation*}\label{lemma2.4 **}
    J_a(u_a)=\frac{a}{n+p}\int_{\mathbb{R}^n}|u_a|^s dx.
\end{equation*}
Now, we set $d=\|v\|_{L^{p}(\mathbb{R}^n)}$ and $\bar{v}=\frac{v(x)}{d}$, then $\|\bar{v}\|_{L^{p}(\mathbb{R}^n)}=1$. Note that
$u_a$ is a minimizer of (\ref{minimizing problem}), hence
\begin{equation*}
    E_{a}(\bar{v}) \geq  E_{a}(u_a), \|u_a\|_{L^{p}(\mathbb{R}^n)}=1,
\end{equation*}
which means that
\begin{equation}\label{lemma2.4 wujiaoxing}
    J_a(\bar{v})=\frac{1}{p}E_{a}(\bar{v})-\frac{\mu_a}{p} \int_{\mathbb{R}^n}|\bar{v}|^p dx \geq \frac{1}{p}E_{a}(u_a)-\frac{\mu_a}{p} \int_{\mathbb{R}^n}|u_a|^p dx=J_a(u_a).
\end{equation}
On the other hand, by the definition of $\bar{v}$ and $J_{a}$ as well as (\ref{lemma2.4 *}), we see that
\begin{equation*}
    J_{a}(\bar{v})=\big( \frac{1}{pd^p}-\frac{1}{sd^{s}}  \big)a \int_{\mathbb{R}^n}|v|^s dx\leq \frac{a}{n+p} \int_{\mathbb{R}^n}|v|^s dx=J_{a}(v),
\end{equation*}
this and (\ref{lemma2.4 wujiaoxing}) show that $J_{a}(v)\geq J_{a}(u_a)$. We complete the proof.

\end{proof}

\begin{lemma}\label{a b Cp}
Let $a>0$, $b>0$ and $p>1$. then there exists $C_p=C(p)>0$ such that
\begin{equation*}
    (a+b)^p \leq a^p +b^p+C_p a^{p-1}b+C_p ab^{p-1}.
\end{equation*}
\end{lemma}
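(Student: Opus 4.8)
The plan is to reduce the inequality to a one-variable statement by homogeneity and then dispatch it with a compactness argument, which avoids any delicate case analysis. Since both sides are homogeneous of degree $p$ in $(a,b)$, after dividing by $b^p$ (using $b>0$) it suffices to find $C_p>0$ such that
\begin{equation*}
(t+1)^p \leq t^p + 1 + C_p t^{p-1} + C_p t
\end{equation*}
for all $t=a/b>0$. Equivalently, defining $f(t) = \dfrac{(t+1)^p - t^p - 1}{t^{p-1}+t}$ for $t>0$, it suffices to show that $f$ is bounded above on $(0,\infty)$, because then any $C_p \geq \sup_{t>0} f(t)$ works.

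First I would check the two endpoint limits. As $t\to 0^+$, the numerator $(t+1)^p - t^p - 1 = pt + o(t)$ while the denominator $t^{p-1}+t \sim t^{p-1}$ if $p<2$, $\sim 2t$ if $p=2$, and $\sim t$ if $p>2$; in every case the ratio stays bounded near $0$ (it tends to $0$ when $p>2$, to $p/2$ when $p=2$, and to $0$ when $1<p<2$ since then $t/t^{p-1}=t^{2-p}\to 0$). As $t\to +\infty$, write $(t+1)^p - t^p = t^p[(1+1/t)^p - 1] = pt^{p-1} + O(t^{p-2})$, so the numerator is $pt^{p-1} + O(t^{p-2})$ and the denominator is $t^{p-1} + t \sim t^{p-1}$ (resp. $\sim t$ if $p<2$); hence $f(t)\to p$ if $p\geq 2$ and $f(t)\to 0$ if $p<2$. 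Since $f$ is continuous and positive on the compact-after-one-point-compactification interval, being bounded at both ends it attains a finite supremum on $(0,\infty)$; call it $C_p$. This $C_p$ depends only on $p$, as required, and the lemma follows by multiplying back by $b^p$.

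I do not expect a genuine obstacle here: the only mildly fiddly point is handling the three regimes $p<2$, $p=2$, $p>2$ uniformly when computing the endpoint behavior of the denominator $t^{p-1}+t$, but in all regimes the numerator's growth is dominated or matched by the denominator, so boundedness is immediate. An alternative, even more elementary route is to prove the pointwise bound directly: split into $t\leq 1$ and $t\geq 1$, use the mean value theorem on $\varphi(x)=x^p$ to write $(t+1)^p - t^p \leq p(t+1)^{p-1}$, and then bound $p(t+1)^{p-1}$ by a constant times $t^{p-1}+t$ on each piece; but the homogeneity-plus-continuity argument is cleaner and I would present that.
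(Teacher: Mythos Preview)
Your argument is correct. The paper itself leaves this lemma without proof (the proof environment is literally empty), treating it as an elementary fact, so there is nothing to compare against. Your homogeneity-plus-compactness route is a standard and sound way to fill the gap: reducing to boundedness of $f(t)=\dfrac{(t+1)^p-t^p-1}{t^{p-1}+t}$ on $(0,\infty)$, then checking continuity together with finite limits at $0^+$ and $+\infty$, does the job.

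Two harmless computational slips (they do not affect validity, since all you need is that the limits are finite): as $t\to 0^+$ with $p>2$, the numerator is $\sim pt$ and the denominator is $\sim t$, so the ratio tends to $p$, not $0$; and as $t\to+\infty$ with $p=2$, the denominator is $2t$, so the limit is $p/2=1$, not $p$. You may want to correct these values if you write the proof out, but the conclusion that $\sup_{t>0}f(t)<\infty$ stands.
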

\begin{proof}[]
\end{proof}

\section{Existence of ground states}
\noindent

The aim of this section is to prove Theorem \ref{mainresult}. By Theorem \ref{mainresult} and Lemma \ref{lemma 2.4} we then
 get the existence and non-existence of ground states of (\ref{Equation}). For the $p=2$ case we refer to \cite{Bao Cai YY,guo,Zhangjian}.
\\
\\
 \begin{proof}[Proof of Theorem 1.1]\textbf{(i).}
 For any $u\in\mathcal{H}$ with $||u||_{L^p}=1$, by (\ref{GNinequality}) and (V) we know that, if $a \in [0,a^*)$
 \begin{eqnarray}
\nonumber E_a(u)& \geq &  \Big( 1-\frac{a}{ a^{*}}\Big)
 \int_{\mathbb{R}^{n}} |\nabla u|^p  dx+\int_{\mathbb{R}^{n}} V(x)|u(x)|^p  dx\label{adda}\\
 & \geq & \Big(  1-\frac{a}{a^{*}} \Big)\int_{\mathbb{R}^{n}} |\nabla u|^p dx \geq 0. \label{ieeeb}
 \end{eqnarray}
So,   $e(a)$ in (\ref{minimizing problem}) is well defined. Let $\{u_m \}\subset \mathcal{H}$ be a minimizing
	sequence, that is,  $||u_m||_{L^p (\mathbb{R}^n)}=1 $ and
	$\lim\limits_{m\rightarrow \infty}E_a(u_m)=e(a)$. Using (\ref{ieeeb}),
 we see that both $\int_{\mathbb{R}^{n}} |\nabla u_m(x)|^p dx  $ and $\int_{\mathbb{R}^{n}} V(x)|u_m(x)|^p dx  $
 are bounded, hence $\{u_m \}$ is bounded in $\mathcal{H}$. By  Lemma \ref{compactlemma},
 we can extract a subsequence such that
 $$ u_m \overset{m}{\rightharpoonup} u \text{ in } \mathcal{H} \text{ and } u_m \overset{m}{\rightarrow} u
 \text{ strongly in } L^q (\mathbb{R}^n  ), \text{ for any } p\leq q< p^*,$$
  for some $u\in \mathcal{H}$. Then,
 $\int_{\mathbb{R}^{n}} |u(x)|^p=1 $ and  $E_a(u)=e(a)  $ by the weak lower semi-continuity of $E_a$.
  This implies $u$ is a minimizer of $e(a)$.

 \textbf{(ii).} Choose a non-negative
 $ \varphi\in C^{\infty}_{0}(  \mathbb{R}^n )  $  such that
 $$\varphi(x)=1 \text{ if } |x|\leq 1, \text{ and } \varphi(x)=0 \text{ if } |x|\geq 2.$$
 For any $x_0\in \mathbb{R}^n, \tau >0$ and
 $R>0 $, motivated by \cite{guo} we let
 \begin{equation}\label{nomi}
 u_{\tau}(x)=A_{R,\tau}\frac{{\tau}^{\frac{n}{p}}}
 {||Q||_{L^p}   }
  \varphi \Big(\frac{x-x_0}{R}  \Big)Q \big(\tau (x-x_0) \big),
 \end{equation}
where Q is a ground state of (\ref{limit equation}) and $A_{R,\tau}$ is chosen such that $\int_{\mathbb{R}^{n}} |u_{\tau}(x)|^p dx=1  $  and then
$\lim\limits_{R\tau\rightarrow\infty} A_{R,\tau}=1$. In fact, it follows from (\ref{decay}) that
\begin{eqnarray}
\nonumber\frac{1}{A^p_{R,\tau}}&=&  \frac{\int\limits_{x\leq \tau R}Q^p(x) dx+\int\limits_{\tau R<|x|\leq 2\tau R }
{\varphi}^p (\frac{x}{\tau R})Q^p(x) dx}{||Q||_{L^p}^p}\\
&=& 1+O(e^{-\delta\tau R}), \quad as \quad \tau R\rightarrow\infty. \label{quyu1}
\end{eqnarray}
By Lemma \ref{a b Cp} and (\ref{decay}) we have
\begin{eqnarray*}
   && \int_{\mathbb{R}^{n}} \Big| \frac{1}{R}\nabla \varphi
(\frac{x}{\tau R}) Q(x)+ \tau \varphi(\frac{x}{\tau R})\nabla Q(x)  \Big|^p dx -\int_{\mathbb{R}^{n}} \Big|  \tau \varphi(\frac{x}{\tau R})\nabla Q(x)  \Big|^p dx  \\
   &\leq& \int_{\mathbb{R}^{n}} \Big|\frac{1}{R}\nabla \varphi(\frac{x}{\tau R})Q(x)\Big|^p dx + C_{p}\int_{\mathbb{R}^{n}} \Big| \frac{1}{R}\nabla \varphi(\frac{x}{\tau R}) Q(x)\Big|^{p-1}\Big| \tau \varphi(\frac{x}{\tau R})\nabla Q(x)  \Big| dx\\
   &  &  +C_{p}\int_{\mathbb{R}^{n}} \Big| \frac{1}{R}\nabla \varphi
   (\frac{x}{\tau R}) Q(x)\Big|\Big| \tau \varphi(\frac{x}{\tau R})\nabla Q(x)  \Big|^{p-1} dx\\
    &=& O(e^{-\delta\tau R}), \quad as \quad \tau R\rightarrow\infty.
\end{eqnarray*}
Then, by (\ref{quyu1}) and the exponential decay of   $Q$  (\ref{decay}), we see that
\begin{eqnarray}
\nonumber	& &\int_{\mathbb{R}^{n}}  |\nabla u_{\tau}(x)|^{p} dx-\frac{na}{n+p}\int_{{\mathbb{R}^{n}}}|u_{\tau}(x)|^{s} dx\\
\nonumber &=& \frac{A_{R,\tau}^{p} {\tau}^{n}}{ ||Q||_{L^p}^p }
\int_{\mathbb{R}^{n}} \Big|\frac{1}{R}\nabla \varphi(\frac{x-x_0}{R})Q(\tau(x-x_0)) +
\varphi(\frac{x-x_0}{R})\nabla Q(\tau(x-x_0))\tau  \Big|^{p} dx \\
\nonumber & & -\ \frac{na}{n+p}\frac{A_{R,\tau}^s{\tau}^{\frac{sn}{p}}}{||Q||_{L^{p}}^{s}}
\int_{\mathbb{R}^{n}} \Big|\varphi(\frac{x-x_0}{R}  )Q(\tau(x-x_0)) \Big|^s dx  \\
\nonumber &=&\frac{A_{R,\tau}^{p}}{||Q||_{L^{p}}^{p}}\int_{\mathbb{R}^{n}} \Big| \frac{1}{R}\nabla \varphi
(\frac{x}{\tau R}) Q(x)+ \tau \varphi(\frac{x}{\tau R})\nabla Q(x)  \Big|^p dx \\
\nonumber & & -\ \frac{na}{n+p}  \frac{A_{R,\tau}^{s}{\tau}^p}{||Q||_{L^p}^{s}  } \int_{\mathbb{R}^{n}} \Big |
\varphi (\frac{x}{R\tau})Q(x)    \Big |^{s} dx\\
\nonumber &\leq& \frac{A_{R,\tau}^{p}}{||Q||_{L^p}^p}\int_{\mathbb{R}^{n}} \Big| \tau \varphi(\frac{x}{\tau R})\nabla Q(x)
     \Big |^p dx-\frac{na}{n+p}\frac{A_{R,\tau}^{s} {\tau}^p }{||Q||_{L^p}^{s}}
     \int_{\mathbb{R}^{n}} |\varphi(\frac{x}{\tau R})Q(x)|^{s}dx \\
\nonumber    & &  +O(e^{-\delta\tau R}) \text{ as } \tau R\rightarrow\infty \\
    &\leq&\frac{\tau^{p}}   {||Q||_{L^p}^p}
 \Big(\int_{\mathbb{R}^{n}} |\nabla Q|^p dx-\frac{na}{(n+p)a^{*}}\int_{\mathbb{R}^{n}} |Q|^{s}dx\Big)+O(e^{-\delta\tau R}),
 \text{ as } R\tau \rightarrow\infty.\label{*}
\end{eqnarray}
It then follows from (\ref{*}) and (\ref{Q}) that
\begin{equation}\label{proofc}
    \int_{\mathbb{R}^{n}} |\nabla u_{\tau}|^{p} dx-\frac{na}{n+p}\int_{{\mathbb{R}^{n}}}|u_{\tau}|^{s} dx\leq\tau^{p} (1-\frac{a}{a^{*}}) +O(e^{-\delta\tau R}).
\end{equation}
On the other hand, since  $ u_{\tau}(x) $ is bounded and has compact support, the convergence
\begin{equation}
\lim\limits_{\tau \rightarrow \infty}\int_{\mathbb{R}^{n}} V(x)|u_{\tau}(x)|^p dx
= \lim\limits_{\tau\rightarrow\infty}\int_{\mathbb{R}^{n}}
\frac{V( \frac{x}{\tau} +x_0  )}{||Q||_{L^p}^p}{\varphi}^p(\frac{x}{\tau R})Q^p(x) dx =V(x_0)\label{proofd}
\end{equation}
 holds for all  $x_0\in\mathbb{R}^n$.

When $a>a^*$, it follows from (\ref{proofc}) and (\ref{proofd})
that $$ e(a)\leq \lim\limits_{\tau \rightarrow \infty}E_a(u)=-\infty. $$
This implies that for any $a>a^*$, $e(a)$ is unbounded  from below, and the
nonexistence of minimizers is therefore proved.

When $a=a^*$, taking $x_{0}\in \mathbb{R}^{n}$ such that $V(x_{0})=0$, then (\ref{proofc}) and (\ref{proofd}) imply that  $ e(a^*)\leq 0 $,
but we know that $ e(a^*)\geq 0$ by (\ref{adda}),   so $e(a^*)=0$.
If there exists a minimizer $u_0\in \mathcal{H}$ for $e(a^*)=0 $ with $||u_0||_{L^p}=1$,
 then
  \begin{equation*}
 \int_{\mathbb{R}^{n}} V(x)|u_0(x)|^p dx=\inf\limits_{x\in\mathbb{R}^n} V(x)=0,
  \end{equation*}
and
  \begin{equation*}
  \int_{\mathbb{R}^{n}} |\nabla u_0(x)|^p dx=\frac{na^*}{n+p}\int_{\mathbb{R}^{n}} |u_0(x)|^s  dx.
  \end{equation*}
These lead to a contradiction, since  the first equality implies that $u_0$ must have
 compact support, while  the second equality means that  $u_0$ has to be a nonnegative ground state  of (\ref{limit equation}) by Lemma \ref{Gagliardo-Nirenberg inequality}, thus $u_0 >0$ by the strong maximum principle \cite{maximumprinciple2}.
 So   problem (\ref{minimizing problem}) has no minimizer for $a=a^*$.

 Note that (\ref{ieeeb}) implies that $ e(a)>0 $  for $a<a^*$.
We have already shown that $e(a^*)=0 $ and  $ e(a)=-\infty$ if  $a>a^*$,
hence it remains to prove that $\lim\limits_{a\nearrow a^*}
 e(a)=0$. Indeed, let $x_0\in\mathbb{R}^n$  be such that $V(x_0)=0$, set $\tau=(a^*-a)^{-\frac{1}{p+1}}$. Then if $a\nearrow a^*$, it  follows easily from (\ref{proofc}) and (\ref{proofd}) that $ \limsup\limits_{a\nearrow a^*} e(a)\leq 0$, hence $\lim\limits_{a\nearrow a^*} e(a)=0$.
\end{proof}

\section{Blowup behavior for general trapping potential}
\noindent

In this section, we come to analyze the concentration (blowup) behavior of the ground states of (\ref{Equation}) as $a\nearrow a^*$
under the general assumption (V), that is, to give a proof of  Theorem \ref{mainresult2}.

Let  $u_{a}$    be a nonnegative minimizer of (\ref{minimizing problem}),   thus $u_{a}$ satisfies the following equation
\begin{equation}\label{Euler-Lagrange}
    -\Delta_{p}u_{a}+V(x)u_{a}^{p-1}=\mu_{a}u_{a}^{p-1}+au_{a}^{s-1},
\end{equation}
where $\mu_{a}\in \mathbb{R}$ is a suitable Lagrange multiplier.
\\
\\
\begin{proof}[Proof of Theorem \ref{mainresult2}]

\textbf{(i).} By contradiction, if (\ref{epxl}) is false, then there exists a sequence $\{a_{k}\}$ with $a_{k}\nearrow a^{*}$ as
$k\rightarrow\infty $  such that $\{u_{a_{k}}(x)\}$ is bounded in $\mathcal{H}$. By applying Lemma \ref{compactlemma}, there exist a
subsequence of $\{a_{k}\}$ (still denoted by $\{a_{k}\}$) and $u_{0}\in \mathcal{H}$ such that
\begin{equation*}
u_{a_{k}}\overset{k}{\rightharpoonup}  u_{0}  \text{ weakly in } \mathcal{H}  \text{ and }  u_{a_{k}} \xrightarrow{k} u_{0} \text{ in } L^{p}(\mathbb{R}^{n}).
\end{equation*}
Thus,
$$0=e(a^{*})\leq E_{a^{*}}(u_{0})\leq \lim \limits_{k\rightarrow\infty} E_{a_{k}}(u_{a_{k}})=
\lim \limits_{k\rightarrow\infty}e(a_{k})=0,$$
since $e(a)\rightarrow 0$ as $a\nearrow a^{*}$, by Theorem \ref{mainresult}. This  shows that $u_{0}$ is a minimizer of $e(a^{*})$,
which is impossible by  Theorem \ref{mainresult}\textbf{(ii)}. So, part \textbf{(i)} is proved.

\textbf{(ii)}.  For any solution $ u_a$ of (\ref{Euler-Lagrange}), by the result of \cite{ligongbao} we know that $u_{a}\in C_{loc}^{1,\alpha}(\mathbb{R}^{n})$ for some $ \alpha \in (0,1)$  and
$$u_{a}(x)\rightarrow 0 \quad as \quad |x|\rightarrow\infty,$$
this implies that each $u_{a}$ has at least one maximum point. Let $\bar{z}_{a}$ be a global maximum point and
define
\begin{equation}\label{w dingyi}
    \bar{w}_{a}(x)=\varepsilon^{\frac{n}{p}}_{a}u_{a}(\varepsilon_{a}x+\bar{z}_{a}),
\end{equation}
then
\begin{equation}\label{fanshu}
    \int_{\mathbb{R}^{n}}   |\nabla \bar{w}_{a}|^{p}=\int_{\mathbb{R}^{n}}   | \bar{w}_{a}|^{p}=1.
\end{equation}

By (\ref{GNinequality}), we know that
$$0\leq \int_{\mathbb{R}^{n}}|\nabla u_{a}|^{p}-\frac{n}{n+p}a\int_{\mathbb{R}^{n}}| u_{a}|^{s}=\varepsilon_{a}^{-p}
-\frac{n}{n+p}a\int_{\mathbb{R}^{n}}| u_{a}|^{s}\leq e(a).$$
By part \textbf{(i)} and Theorem \ref{mainresult} \textbf{(ii)}, we have
$$\varepsilon_{a}\rightarrow 0 \quad \text{and} \quad e(a)\rightarrow 0 \quad as \quad a\nearrow a^{*},$$
then
\begin{equation}\label{w}
    \int_{\mathbb{R}^{n}}| \bar{w}_{a}|^{s}= \varepsilon_{a}^{p}\int_{\mathbb{R}^{n}}| u_{a}|^{s}\rightarrow \frac{n+p}{na^{*}} \quad as \quad
a\nearrow a^{*}.
\end{equation}
Now, we claim that

\begin{equation}\label{yita}
\liminf \limits_{a\nearrow a^{*}} \int_{B_{2}(0)}   | \bar{w}_{a}|^{p} \geq \eta>0.
\end{equation} Indeed, it follows
from (\ref{Euler-Lagrange}) that
$$\mu_{a}=e(a)-\frac{pa}{n+p}\int_{\mathbb{R}^{n}}| u_{a}|^{s},$$
this together with (\ref{w}) indicates that
\begin{equation}\label{2}
    \varepsilon_{a}^{p}\mu_{a}\rightarrow -\frac{p}{n} \quad as \quad a\nearrow a^{*}.
\end{equation}
Moreover, by (\ref{Euler-Lagrange}) and (\ref{w dingyi}), $ \bar{w}_{a}$ satisfies that
\begin{equation}\label{Euler-Lagrange2}
    -\Delta_{p}\bar{w}_{a}(x)+\varepsilon_{a}^{p}V(\varepsilon x+\bar{z}_{a})\bar{w}_{a}^{p-1}(x)=\varepsilon_{a}^{p}\mu_{a}\bar{w}_{a}^{p-1}(x)+a\bar{w}_{a}^{s-1}(x),
\end{equation}
and since $\bar{w}_{a}\geq0$ and $ \varepsilon_{a}^{p}\mu_{a}\leq 0 $  for $a$ close to $a^*$, it follows  from (\ref{Euler-Lagrange2}) that
\begin{equation*}\label{3}
    -\Delta_{p}\bar{w}_{a}-c(x)\bar{w}_{a}^{p-1}\leq 0, \quad \text{where} \quad c(x)=a\bar{w}_{a}^{s-p}.
\end{equation*}
Thus by Theorem 7.1.1 of \cite{maximumprinciple} we have
\begin{equation}\label{4}
    \sup \limits_{B_{1}(\xi)} \bar{w}_{a}\leq C (\int_{B_{2}(\xi)} |\bar{w}_{a}|^{p})^{\frac{1}{p}},
\end{equation}
where $\xi$ is an arbitrary point in $\mathbb{R}^{n}$ and $C>0$ depends only on the upper bound of $\|c(x)\|_{L^{\frac{n}{p(1-\epsilon)}}(B_{2}(\xi))}$, i.e., $\|\bar{w}_{a}\|_{L^{\frac{p}{1-\epsilon}}(B_{2}(\xi))}$, for some $0<\epsilon\leq1$. On the other hand,
\begin{equation}\label{5}
    \bar{w}_{a}(0)\geq\zeta \quad \text{uniformly as}  \quad  a\nearrow a^{*} \quad \text{for some} \quad \zeta>0,
\end{equation}
since $0$ is a global maximum point of $ \bar{w}_{a}$. Otherwise,  there exists a sequence $a_{k}\nearrow a^{*}$ such that
$ \bar{w}_{a_{k}}(0)=\|\bar{w}_{a_{k}}\|_{L^{\infty}(\mathbb{R}^n)} \xrightarrow{k} 0$, then by concentration-compactness lemma \cite{Lions} we have
$\int_{\mathbb{R}^{n}}| \bar{w}_{a_{k}}|^{s}\rightarrow 0 $ as $k\rightarrow\infty$, which contradicts (\ref{w}).
So, (\ref{4}) and (\ref{5}) imply (\ref{yita}).

In what follows, we come to prove (\ref{dist}) by using (\ref{yita}). Since (\ref{GP}) and (\ref{GNinequality}), we have
\begin{equation*}\label{6}
    \int_{\mathbb{R}^{n}}V(x)|u_{a}|^{p}dx \leq e(a)\rightarrow 0 \quad as \quad a\nearrow a^{*},
\end{equation*}
that is
\begin{equation}\label{7}
    \int_{\mathbb{R}^{n}}V(x)|u_{a}|^{p}dx = \int_{\mathbb{R}^{n}}V(\varepsilon_{a}x+\bar{z}_{a})|\bar{w}_{a}|^{p}dx
     \rightarrow 0 \quad as \quad a\nearrow a^{*}.
\end{equation}
By contradiction, if (\ref{dist}) is false, then there is a constant $\delta>0 $ and a sequence $\{a_{k}\}$ with
$a_{k}\nearrow a^{*}$ as $k\rightarrow \infty$ such that
\begin{equation}\label{dist convergent}
   \varepsilon_{a_{k}}\xrightarrow{k} 0  \quad \text{and} \quad \lim\limits_{k\rightarrow\infty}\inf dist(\bar{z}_{a_{k}},\mathcal{A})\geq \delta >0,
\end{equation}
then,  there exists $C_{\delta}>0$ such that
\begin{equation}\label{dist convergent2}
     \liminf \limits_{k\rightarrow \infty} V(\bar{z}_{a_{k}})\geq 2C_{\delta}.
\end{equation}
Indeed, suppose such $C_{\delta}$ does not exist, then, up to a subsequence, there exist $\{\bar{z}_{a_{k}}\}\subset \mathbb{R}^n$ such that
$V(\bar{z}_{a_{k}})\overset{k}{\longrightarrow} 0$. Since $V(x)$ satisfies (V), we have $\{\bar{z}_{a_{k}}\}$ is bounded and thus $\bar{z}_{a_{k}}\overset{k}{\longrightarrow} z_0 $, for some $z_0 \in \mathbb{R}^n$. By the continuity of $V(x)$ we have $z_0 \in \mathcal{A}$,
but this contradicts (\ref{dist convergent}), so (\ref{dist convergent2}) is proved.
By Fatou's Lemma and (\ref{yita}), we see that
$$ \lim\limits_{k\rightarrow \infty}\int_{\mathbb{R}^{n}}V(\varepsilon_{a_{k}}x+\bar{z}_{a_{k}})|\bar{w}_{a_{k}}|^{p}dx
\geq\int_{\mathbb{R}^{n}}\lim\limits_{k\rightarrow \infty}V(\varepsilon_{a_{k}}x+\bar{z}_{a_{k}})|\bar{w}_{a_{k}}|^{p}dx \geq C_{\delta}\eta,$$
which contradicts (\ref{7}). Therefore, (\ref{dist}) holds.

\textbf{(iii).}
Let $\{a_{k}\}$ be a sequence such that $a_{k}\nearrow a^{*}$ as $k\rightarrow\infty$.
For simplicity, we set
$$u_{k}(x):=u_{a_{k}}(x), \quad \bar{w}_{k}:=\bar{w}_{a_{k}}\geq 0, \quad \bar{z}_{k}:=\bar{z}_{a_{k}} \text{  and }\quad \varepsilon_{k}:=\varepsilon_{a_{k}}>0.$$
By (\ref{dist}), (\ref{fanshu}) and (\ref{w}),  there exists a subsequence of $\{a_{k}\}$, still denoted by $\{a_{k}\}$, such that
$$\lim\limits_{k\rightarrow\infty} \bar{z}_{k}=x_{0} \quad \text{with} \quad V(x_{0})=0,$$
and
$$ \bar{w}_{k} \overset{k}{\rightharpoonup} w_{0}\geq0 \quad \text{weakly in} \quad  W^{1,p}(\mathbb{R}^{n})$$
for some $w_{0}\in W^{1,p}(\mathbb{R}^{n})$. Moreover, $\bar{w}_{k}$ satisfies
\begin{equation*}\label{8}
     -\Delta_{p}\bar{w}_{k}(x)+\varepsilon_{k}^{p}V(\varepsilon_{k} x+\bar{z}_{k})\bar{w}_{k}^{p-1}(x)=\varepsilon_{k}^{p}\mu_{k}\bar{w}_{k}^{p-1}(x)+a_{k}\bar{w}_{k}^{s-1}(x).
\end{equation*}

Motivated by the idea of \cite{yangjianfu zhuxiping,ligongbao Acta}, we claim that
\begin{equation}\label{gai7}
    \bar{w}_{k} \overset{k}{\rightarrow} w_{0} \quad \text{strongly in} \quad  W^{1,p}(\mathbb{R}^{n}).
\end{equation}
 Indeed, by Theorem \ref{mainresult} and the definition of $\bar{w}_{k}$ we have
\begin{equation}\label{gai1}
   0\leq \lim\limits_{k\rightarrow\infty} \Big ( \varepsilon_{k}^{-p}\int_{\mathbb{R}^n}| \nabla  \bar{w}_{k}|^p
    -\frac{na_{k}}{n+p}\varepsilon_{k}^{-p}\int_{\mathbb{R}^n}|\bar{w}_{k}|^s \Big )
    \leq \lim\limits_{k\rightarrow\infty} e(a_k)=0,
\end{equation}
and thus by (\ref{fanshu}) and (\ref{gai1}) that
\begin{equation*}
    \int_{\mathbb{R}^n}|\bar{w}_{k}|^p\equiv 1,
\end{equation*}
and
\begin{equation}\label{gai2}
\lim\limits_{k\rightarrow\infty} \Big ( \int_{\mathbb{R}^n}| \nabla  \bar{w}_{k}|^p
    -\frac{na^*}{n+p}\int_{\mathbb{R}^n}|\bar{w}_{k}|^s \Big )=0.
\end{equation}
To prove (\ref{gai7}), we show first that
\begin{equation}\label{gai3}
    \bar{w}_{k} \overset{k}{\rightarrow} w_{0} \quad \text{strongly in} \quad  L^p(\mathbb{R}^{n}).
\end{equation}
By Lemma III.1 of \cite{Lions}, to show (\ref{gai3}) we only need to exclude the ``vanishing case'' and ``dichotomy case'' of  the function sequence $\{ |\bar{w}_{k}|^p \}$.
In fact, we can easily rule out the ``vanishing case'' by (\ref{yita}). Then, arguing indirectly, suppose the ``dichotomy case'' occurs and taking (\ref{yita}) into consideration,
 there exists   $0<\rho<1 $ such that for any $\varepsilon>0$ there are  $R_k>0$, $\{y_k\}\subset \mathbb{R}^n$  and two function sequences
 $ \{w^1_k \},\{w^2_k \}\subset W^{1,p}(\mathbb{R}^{n})$ with $\text{Supp } w^1_k \subset B_{R_k}(y_k) $ and $\text{Supp } w^2_k \subset  \mathbb{R}^{n}\setminus B_{2R_k}(y_k) $
satisfiy
\begin{equation}\label{gai8}
    \Big|\int \limits_{B_{R_k}(y_k)} |w^1_k|^p -\rho\Big|\leq\varepsilon, \quad
    \Big|\int \limits_{\mathbb{R}^{n}\setminus B_{2R_k}(y_k)} |w^2_k|^p-( 1-  \rho)\Big| \leq\varepsilon;
\end{equation}
and, up to a subsequence,
\begin{equation}\label{gai11}
    \int_{\mathbb{R}^n} |\bar{w}_k-(w^1_k+w^2_k)|^s\leq C_{\varepsilon} \overset{\varepsilon\rightarrow 0}{\longrightarrow} 0,
\end{equation}
\begin{equation}\label{gai9}
\liminf \limits_{k\rightarrow\infty}   \int_{\mathbb{R}^n} \Big( |\nabla \bar{w}_k|^p - |\nabla w^1_k|^p-|\nabla w^2_k|^p \Big)\geq 0.
\end{equation}
Then, by (\ref{gai8})-(\ref{gai9}), (\ref{GNinequality})  and (\ref{w})
we have
\begin{eqnarray}\label{gai10}
 \nonumber  &&   \Big(\int_{\mathbb{R}^n} |\nabla \bar{w}_k|^p -\frac{na^*}{n+p}\int_{\mathbb{R}^n}|\bar{w}_{k}|^s\Big) \\
  \nonumber &\geq&\int \limits_{B_{R_k}(y_k)}\Big( |\nabla w^1_k|^p -\frac{na^*}{n+p}|w^1_{k}|^s\Big)+\int \limits_{\mathbb{R}^n\setminus B_{2R_k}(y_k)}\Big( |\nabla w^2_k|^p -\frac{na^*}{n+p}|w^2_{k}|^s\Big)-C_{\varepsilon}\\
  \nonumber &\geq& \frac{na^*}{n+p}\Big(  \big( (\rho+\varepsilon)^{-\frac{p}{n}}-1    \big)\int_{\mathbb{R}^n}|w^1_{k}|^s   +     \big( (1-\rho+\varepsilon)^{-\frac{p}{n}}-1    \big)\int_{\mathbb{R}^n}|w^2_{k}|^s  \Big) -C_{\varepsilon}\\
 &>& 0,
\end{eqnarray}
since  $\varepsilon$ can be arbitrarily small.
Then (\ref{gai10})  leads to a contradiction with (\ref{gai2}). So,
(\ref{gai3}) holds. (\ref{gai3}) and the boundedness of  $\{\bar{w}_{k}\}$ in $W^{1,p}(\mathbb{R}^{n}) $ imply that
\begin{equation}\label{gai4}
    \bar{w}_{k} \overset{k}{\rightarrow} w_{0} \quad \text{strongly in} \quad  L^s(\mathbb{R}^{n}).
\end{equation}
Combine (\ref{gai3})(\ref{gai4}) with (\ref{GNinequality}) and (\ref{w}), we have $\int_{\mathbb{R}^n}| \nabla  w_{0}|^p \geq 1$ and thus
\begin{equation*}\label{gai5}
    \int_{\mathbb{R}^n}| \nabla  w_{0}|^p = 1,
\end{equation*}
by the weak lower semi-continuity of the norm.
Since $W^{1,p}(\mathbb{R}^{n})$ is a uniformly convex  Banach space \cite{Adams}, (\ref{gai7}) holds by \cite{Brezis}.
Moreover,    $w_0$ satisfies
\begin{equation*}
    -\Delta_{p} w_0+\frac{p}{n}{w_0}^{p-1}=a^{*} {w_0}^{s-1},
\end{equation*}
and

\begin{equation*}
    \int_{\mathbb{R}^n}| \nabla  w_{0}|^p = \int_{\mathbb{R}^n}|  w_{0}|^p =1.
\end{equation*}
So, 
Lemma \ref{lemma 2.2} implies that
\begin{equation}\label{gai6}
   w_{0}=\frac{Q(x)}{{a^*}^{\frac{n}{p^2}}},
\end{equation}
for some $Q \in \mathcal{G}$.
Then (\ref{gai7}) and (\ref{gai6}) gives the proof of Theorem \ref{mainresult2}.
\end{proof}

\section{Refined blowup behavior for polynomial type potential}
\noindent

This section is devoted to prove Theorem \ref{mainresult3}. In what follows,  we always denote $\{a_k\}$ to be the convergent subsequence
in Theorem \ref{mainresult2}. Our first lemma is to address the upper bound of $e(a_k)$.
\begin{lemma}\label{lemma}
If $V(x)$ satisfies  (\ref{polynomial V}), then
\begin{equation}\label{11}
    0\leq e(a_k)\leq \frac{(a^*-a_k)^{\frac{q}{p+q}}}{{a^{*}}^{\frac{n+q}{p+q}}}\lambda^{\frac{p}{p+q}} \big((\frac{q}{p})^{\frac{p}{p+q}}+(\frac{p}{q})^{\frac{q}{p+q}}+o(1)\big) \quad \text{as} \quad k\rightarrow\infty,
\end{equation}
where $\lambda$ is given by (\ref{lambda}) and $o(1)$ is a quantity  depends only on $k$.

\end{lemma}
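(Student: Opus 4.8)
The plan is to bound $e(a_k)$ from above by testing the functional $E_{a}$ on a concentrated competitor for (\ref{minimizing problem}) and then optimizing its dilation rate; the lower bound $e(a_k)\geq 0$ is already contained in (\ref{ieeeb}) together with Theorem \ref{mainresult}\textbf{(ii)}. Concretely, fix any minimum point $x_i\in\mathcal{Z}$, let $Q\in\mathcal{G}$ be the ground state furnished by Theorem \ref{mainresult2}\textbf{(iii)} and let $y_0$ be as in (\ref{y_0}); then, for $R>0$ fixed and $\tau>0$ large, I would take the cut-off competitor
\[
 u_{\tau}(x)=\frac{A_{R,\tau}\,\tau^{n/p}}{\|Q\|_{L^p}}\,\varphi\Big(\frac{x-x_i}{R}\Big)\,Q\big(\tau(x-x_i)-y_0\big),
\]
with $A_{R,\tau}>0$ chosen so that $\|u_{\tau}\|_{L^p}=1$, which by (\ref{decay}) satisfies $A_{R,\tau}=1+O(e^{-\delta\tau R})$ as $\tau R\to\infty$ (cf. (\ref{quyu1})). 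The gradient and $L^{s}$ parts of $E_{a}(u_{\tau})$ are computed exactly as in the proof of Theorem \ref{mainresult}\textbf{(ii)} (the $y_0$-shift is harmless since $Q$ decays exponentially), so (\ref{proofc}) still gives
\[
 \int_{\mathbb{R}^{n}}|\nabla u_{\tau}|^{p}\,dx-\frac{na}{n+p}\int_{\mathbb{R}^{n}}|u_{\tau}|^{s}\,dx\leq \tau^{p}\Big(1-\frac{a}{a^{*}}\Big)+O(e^{-\delta\tau R}).
\]

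The only new estimate is for the potential term. After the change of variables $y=\tau(x-x_i)-y_0$, I would write $V(x_i+z)=|z|^{q}\psi(z)$ with $\psi(z)=V(x_i+z)/|z|^{q}$; because $x_i\in\mathcal{Z}$ and $\lambda_i=\lambda<\infty$, the exponent $q_i$ in (\ref{polynomial V}) must equal $q$, so $\psi$ stays bounded near the origin and $\psi(z)\to\lim_{x\to x_i}V(x)/|x-x_i|^{q}$ as $z\to0$. Hence $\tau^{q}V\big(x_i+\tfrac{y+y_0}{\tau}\big)=|y+y_0|^{q}\,\psi\big(\tfrac{y+y_0}{\tau}\big)$, and using the exponential decay of $Q$ to dominate the polynomial growth of $V$ on the ball of radius $\sim R\tau$ where $\varphi(\cdot/R\tau)$ is supported, dominated convergence yields (recall $\|Q\|_{L^p}^{p}=(a^{*})^{n/p}$ and the definition of $\lambda_i=\lambda$ in (\ref{lambda}))
\[
 \int_{\mathbb{R}^{n}}V(x)|u_{\tau}|^{p}\,dx=\frac{\lambda}{(a^{*})^{n/p}\,\tau^{q}}\,(1+o(1))\qquad\text{as }\tau\to\infty .
\]

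Summing the two displays, $e(a)\leq E_{a}(u_{\tau})\leq \beta\tau^{p}+\gamma\tau^{-q}+O(e^{-\delta\tau R})$ for every large $\tau$, where $\beta=(a^{*}-a)/a^{*}$ and $\gamma=\lambda(1+o(1))/(a^{*})^{n/p}$. The elementary function $\tau\mapsto\beta\tau^{p}+\gamma\tau^{-q}$ attains its minimum at $\tau_{a}=(q\gamma/p\beta)^{1/(p+q)}$, with value $\beta^{q/(p+q)}\gamma^{p/(p+q)}\big((q/p)^{p/(p+q)}+(p/q)^{q/(p+q)}\big)$; since $\beta\to0$ as $a\nearrow a^{*}$ we get $\tau_{a}\to\infty$, so the $o(1)$ is genuine and $O(e^{-\delta\tau_{a}R})$ is exponentially small in $\tau_a$, hence negligible against the polynomial terms and absorbed into the $o(1)$. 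Plugging $\tau=\tau_{a_k}$ in and substituting $\beta$ and $\gamma$ reproduces exactly the right-hand side of (\ref{11}). The one delicate point is the potential estimate: one has to pass to the limit under the integral although the cut-off $\varphi(\cdot/R\tau)$ lives on a ball of radius $\sim R\tau$ on which $V$ is unbounded, and this is exactly where the exponential decay (\ref{decay}) and the equality $q_i=q$ for $x_i\in\mathcal{Z}$ are used; the subsequent one-variable optimization is routine.
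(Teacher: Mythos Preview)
Your proposal is correct and follows essentially the same approach as the paper: the same shifted, cut-off rescaling of $Q$ centered at a point of $\mathcal{Z}$ is used as a competitor, the kinetic/$L^s$ part is controlled via (\ref{proofc}), the potential term is reduced to $\tau^{-q}\lambda/(a^{*})^{n/p}(1+o(1))$ after a change of variables, and one then optimizes $\beta\tau^{p}+\gamma\tau^{-q}$ over $\tau$. The only cosmetic difference is that the paper takes $R=\tau^{-1/2}$ (so the physical support shrinks to $x_0$, making the passage to the limit $\lim_{x\to x_0}V(x)/|x-x_0|^{q}$ immediate), whereas you keep $R$ fixed and rely on the exponential decay of $Q$ to justify dominated convergence on the growing ball in the $y$-variable; both choices work and lead to the same estimate.
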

\begin{proof}
Let  $Q \in \mathcal{G}$  be  given in Theorem \ref{mainresult2} and $y_0$ as in (\ref{y_0}). Take
\begin{equation*}
     u(x)=A_{R,\tau}\frac{{\tau}^{\frac{n}{p}}}
 {||Q||_{L^p}   }
  \varphi \Big(\frac{x-x_0}{R}  \Big)Q(\tau (x-x_0)-y_0)
\end{equation*}
with $x_{0}\in \mathcal{Z}$ as  a trial function. Let  $R=\frac{1}{\sqrt{\tau}}$ and it is easy to see $R\rightarrow0$ and $\tau R\rightarrow \infty$ as
$\tau\rightarrow\infty$. Then,
\begin{eqnarray}
\nonumber\int_{\mathbb{R}^{n}}V(x)|u|^{p}&=& A_{R,\tau}^{p} \int_{\mathbb{R}^{n}}V(x)\frac{\tau^{n}}{\|Q\|_{L^{p}}^{p}} \varphi^{p} (\frac{x-x_0}{R} )Q^{p}(\tau (x-x_0)-y_0) dx \\\nonumber
   &=&\frac{A_{R,\tau}^{p}}{\|Q\|_{L^{p}}^{p}} \int_{\mathbb{R}^{n}} \tau^{n} |x-x_{0}|^{q}\varphi^{p} (\frac{x-x_0}{R} )Q^{p}(\tau (x-x_0)-y_0)\frac{V(x)}{|x-x_{0}|^{q}} dx \\\nonumber
   &\leq & \frac{A_{R,\tau}^{p}}{\|Q\|_{L^{p}}^{p}} \tau^{-q} \Big( \int_{\mathbb{R}^{n}} |x|^{q}Q^{p}(x-y_0)dx \lim \limits_{x\rightarrow x_{0}} \frac{V(x)}{|x-x_{0}|^{q}}+o(1)\Big) \\
   &\leq& A_{R,\tau}^{p} {a^{*}}^{-\frac{n}{p}} \tau^{-q}\big( \lambda+o(1)\big),\label{12}
\end{eqnarray}
with $o(1)\rightarrow 0 $ as $\tau\rightarrow\infty $. By (\ref{12}), (\ref{quyu1}) and (\ref{proofc}),
we see that, for large $\tau$,
\begin{eqnarray*}
    e(a_k) &\leq & E_{a_k}(u)=\int_{\mathbb{R}^{n}} |\nabla u|^{p} - \frac{n}{n+p}a_k\int_{\mathbb{R}^{n}}|u|^{s}+ \int_{\mathbb{R}^{n}}V(x)|u|^{p}\\
   &\leq&  \frac{a^{*}-a_k}{a^{*}} \tau^{p}+ {a^{*}}^{-\frac{n}{p}} \tau^{-q} \big(\lambda+o(1)\big)+O(e^{-\delta R\tau}).
\end{eqnarray*}
Choose $\tau=(\frac{{a^{*}}^{1-\frac{n}{p}}\lambda q}{(a^{*}-a_k)p})^{\frac{1}{p+q}}$ and thus $\tau \overset{k}{\rightarrow}\infty$, then
\begin{equation*}
     0\leq e(a_k)\leq \frac{(a^*-a_k)^{\frac{q}{p+q}}}{{a^{*}}^{\frac{n+q}{p+q}}}\lambda^{\frac{p}{p+q}} \big((\frac{q}{p})^{\frac{p}{p+q}}+(\frac{p}{q})^{\frac{q}{p+q}}+o(1)\big) \quad \text{as} \quad k\rightarrow\infty.
\end{equation*}
\end{proof}

Based on Lemma \ref{lemma} and Theorem \ref{mainresult2}, we are ready to prove Theorem \ref{mainresult3}.
\begin{proof}[Proof of Theorem \ref{mainresult3}]

 \textbf{(i).}
From (\ref{GNinequality}) we have
\begin{eqnarray*}
  e(a_k)=E_{a_k}(u_{a_{k}}) &=&  \int_{\mathbb{R}^{n}} |\nabla u_{a_k}|^{p} - \frac{n}{n+p}a\int_{\mathbb{R}^{n}}|u_{a_k}|^{s}+ \int_{\mathbb{R}^{n}}V(x)|u_{a_k}|^{p}\\
   &\geq& \frac{a^{*}-a_k}{a^{*}}\int_{\mathbb{R}^{n}} |\nabla u_{a_k}|^{p}+\int_{\mathbb{R}^{n}}V(x)|u_{a_k}|^{p} \\
   &=& \frac{a^{*}-a_k}{a^{*}} \varepsilon_{a_k}^{-p} + \int_{\mathbb{R}^{n}}V(\varepsilon_{a_k}x+\bar{z}_{a_k})|\bar{w}_{a_k}|^{p} dx,
\end{eqnarray*}
where $\bar{w}_{a_k}$ is given by (\ref{w dingyi}).
By Theorem\ref{mainresult2}, we may assume that $\bar{z}_{a_k}\rightarrow x_{i}$ with $x_{i}\in\mathcal{A}$ as $k\rightarrow\infty$. Define
$$\bar{V}_{a_{k}}(x)=\frac{V(\varepsilon_{a_{k}}x+\bar{z}_{a_k})}{c_{i}|\varepsilon_{a_k}x+\bar{z}_{a_k}-x_{i}|^{q_{i}}}
\quad \text{with} \quad
c_{i}=\lim \limits_{x\rightarrow x_{i}}\frac{V(x)}{|x-x_{i}|^{q_{i}}},$$
then $\bar{V}_{a_k}(x)\rightarrow 1$  a.e. in $x\in \mathbb{R}^n$ as $k\rightarrow\infty$.
Hence,
\begin{eqnarray*}
  \int_{\mathbb{R}^{n}}V(\varepsilon_{a_{k}}x+\bar{z}_{a_{k}})|\bar{w}_{a_{k}}|^{p} dx &=& \int_{\mathbb{R}^{n}}\bar{V}_{a_{k}}c_{i}|\varepsilon_{a_{k}}x+\bar{z}_{a_{k}}-x_{i}|^{q_{i}}\bar{w}_{a_{k}}^{p} dx \\
   &=& \varepsilon_{a_{k}}^{q_{i}} \int_{\mathbb{R}^{n}}\bar{V}_{a_{k}}c_{i}|x+\frac{\bar{z}_{a_{k}}-x_{i}}{\varepsilon_{a_{k}}}|^{q_{i}}\bar{w}_{a_{k}}^{p} dx.
\end{eqnarray*}
We claim that $\limsup\limits_{k\rightarrow\infty}\frac{|\bar{z}_{a_k}-x_{i}|}{\varepsilon_{a_{k}}}<\infty$ and $q_{i}=q$. Indeed, let
\begin{equation*}
    \rho_{a_k}:=\int_{\mathbb{R}^{n}}\bar{V}_{a_k}c_{i}|x+\frac{\bar{z}_{a_k}-x_{i}}{\varepsilon_{a_k}}|^{q_{i}}\bar{w}_{a_k}^{p} dx,
\end{equation*}
then $\rho_{a_k}\rightarrow\infty$ if $\frac{|\bar{z}_{a_k}-x_{i}|}{\varepsilon_{a_k}}\rightarrow\infty$. Moreover we have
\begin{eqnarray}
 \nonumber e(a_k) &\geq& \frac{a^{*}-a_k}{a^{*}} \varepsilon_{a_k}^{-p}+ \rho_{a_k}\varepsilon_{a_k}^{q_{i}}\\
   &\geq&  (\frac{a^{*}-a_k}{a^{*}})^{\frac{q_{i}}{p+q_{i}}} \rho_{a_k}^{\frac{q_{i}}{p+q_{i}}}  ((\frac{q}{p})^{\frac{p}{p+q}}+(\frac{p}{q})^{\frac{q}{p+q}}), \label{xing}
\end{eqnarray}
which contradicts Lemma \ref{lemma} if $\frac{\bar{z}_{a_k}-x_{i}}{\varepsilon_{a_k}}\xrightarrow{k}\infty$. Thus
\begin{equation*}
    \limsup \limits_{k\rightarrow\infty}\frac{|\bar{z}_{a_k}-x_{i}|}{\varepsilon_{a_k}} <\infty.
\end{equation*}
By Fatou's Lemma and the above fact,  we see that
\begin{equation*}
    \liminf \limits_{k\rightarrow\infty} \rho_{a_k} >0,
\end{equation*}
the above inequality together with (\ref{xing}) and Lemma \ref{lemma} indicates that $q_i=q$. Then using  Fatou's Lemma again,
\begin{eqnarray}
 \nonumber && \liminf \limits_{k\rightarrow\infty} \int_{\mathbb{R}^{n}}\bar{V}_{a_k}|x+\frac{\bar{z}_{a_k}-x_{i}}{\varepsilon_{a_k}}|^{q}w_{a_k}^{p} dx  \lim \limits_{x\rightarrow x_{i}}\frac{V(x)}{|x-x_{i}|^{q}}\\
   &\geq& \frac{1}{{a^{*}}^{\frac{n}{p}}}
    \int_{\mathbb{R}^{n}}|x+y_0|^{q} Q^{p} dx  \lim \limits_{x\rightarrow x_{i}}\frac{V(x)}{|x-x_{i}|^{q}}
   \geq  \frac{1}{{a^{*}}^{\frac{n}{p}}} \lambda, \label{houmianyong}
\end{eqnarray}
where  (\ref{y_0}) and (\ref{lambda}) were used in the last two inequalities.
So,
\begin{eqnarray}
 \nonumber e(a_k) &\geq&  \frac{a^{*}-a_k}{a^{*}} \varepsilon_{a_k}^{-p} + \varepsilon_{a_k}^{q}\frac{1}{{a^{*}}^{\frac{n}{p}}}\lambda \\
  &\geq& \frac{(a^*-a_k)^{\frac{q}{p+q}}}{{a^{*}}^{\frac{n+q}{p+q}}}\lambda^{\frac{p}{p+q}} ((\frac{q}{p})^{\frac{p}{p+q}}+(\frac{p}{q})^{\frac{q}{p+q}}),\label{15}
\end{eqnarray}
where the equality in the last inequality holds if and only if $$\varepsilon_{a_k}={a^{*}}^{\frac{n-p}{p(p+q)}}
 (a^{*}-a_k)^{\frac{1}{p+q}} \lambda^{-\frac{1}{p+q}}(\frac{p}{q})^{\frac{1}{p+q}}. $$
This together with Lemma \ref{lemma}  shows Theorem \ref{mainresult3}\textbf{(i)}.

\textbf{(ii).} By Lemma \ref{lemma} we know that the inequality (\ref{houmianyong}) is in fact an equality which implies $x_{0}\in \mathcal{Z}$. Then we need only to show that
\begin{equation} \label{16}
    \lim \limits_{k\rightarrow\infty }\frac{\varepsilon_{k}}{\sigma_{k}}=1.
\end{equation}
If (\ref{16})  is false, then there exists a subsequence of $\{k\}$, still denoted by  $\{k\}$,  such that
\begin{equation*}
    \lim \limits_{k\rightarrow\infty }\frac{\varepsilon_{k}}{\sigma_{k}}=\theta \neq1, \quad \text{with} \quad 0\leq\theta\leq\infty.
\end{equation*}
From (\ref{15}) we have
\begin{eqnarray*}
  e(a_k) &\geq&  \frac{a^{*}-a_{k}}{a^{*}} \varepsilon_{k}^{-p} + \varepsilon_{k}^{q}\frac{1}{{a^{*}}^{\frac{n}{p}}}\lambda \\
  \nonumber &\geq& \frac{(a^*-a_k)^{\frac{q}{p+q}}}{{a^{*}}^{\frac{n+q}{p+q}}}\lambda^{\frac{p}{p+q}} \big((\frac{q}{p})^{\frac{p}{p+q}}\theta^{-p}+(\frac{p}{q})^{\frac{q}{p+q}}\theta^{q}\big)\\
  &>& \frac{(a^*-a_k)^{\frac{q}{p+q}}}{{a^{*}}^{\frac{n+q}{p+q}}}\lambda^{\frac{p}{p+q}} \big((\frac{q}{p})^{\frac{p}{p+q}}+(\frac{p}{q})^{\frac{q}{p+q}}\big),
\end{eqnarray*}
when $\lim \limits_{k\rightarrow\infty }\frac{\varepsilon_{k}}{\sigma_{k}}=\theta \neq1$,  and this contradicts Lemma \ref{lemma}. (\ref{16}) together with
(\ref{jixian}) gives (\ref{jixian2}) and (\ref{sigma}). So the proof is complete.

\end{proof}

{\bf Declaration of interest:} We confirm that there are no conflicts of interest associated with this publication.

{\bf Acknowledgements:} This work was supported by the NFSC [grant numbers 11471331,11501555].

\end{document}